\newtheorem{theorem}{Theorem}[section]
\newtheorem{lemma}[theorem]{Lemma}
\newtheorem{corollary}[theorem]{Corollary}
\newtheorem{observation}[theorem]{Observation}
\newtheorem{sublemma}{}[theorem]
\newcommand{\cC}{{\mathcal C}}
\newcommand{\cG}{{\mathcal G}}
\newcommand{\cJ}{{\mathcal J}}
\newcommand{\cM}{{\mathcal M}}
\newcommand{\cN}{{\mathcal N}}
\newcommand{\cT}{{\mathcal T}}
\renewcommand\subsection{\@startsection{subsection}{2}%
  \z@{.5\linespacing\@plus.7\linespacing}{.5em}%
  {\normalfont\scshape}}
\title[Displaying trees across two networks]{Displaying trees across two phylogenetic networks}
\author{Janosch D\"ocker, Simone Linz, and Charles Semple} 
\thanks{We thank Britta Dorn for insightful discussions. The second and third author thank the New Zealand Marsden Fund for their financial support.}
\address{Department of Computer Science, University of T\"ubingen, T\"ubingen, Germany}
\email{janosch.doecker@uni-tuebingen.de}
\address{School of Computer Science, University of Auckland, Auckland, New Zealand}
\email{s.linz@auckland.ac.nz}
\address{School of Mathematics and Statistics, University of Canterbury, Christchurch, New Zealand}
\email{charles.semple@canterbury.ac.nz}
\keywords{display set, normal networks, phylogenetic networks, polynomial-time hierarchy, temporal networks, tree containment}
\date{\today}
\begin{document}

\begin{abstract}
Phylogenetic networks are a generalization of phylogenetic trees to leaf-labeled directed acyclic graphs that represent ancestral relationships between species whose past includes non-tree-like events such as hybridization and horizontal gene transfer. Indeed, each phylogenetic network embeds a collection of phylogenetic trees. Referring to the collection of trees that a given phylogenetic network $\cN$ embeds as the display set of $\cN$, several questions in the context of the display set of $\cN$ have recently been analyzed. For example, the widely studied {\sc Tree-Containment} problem asks if a given phylogenetic tree is contained in the display set of a given network. The focus of this paper are two questions that naturally arise in comparing the display sets of two phylogenetic networks. First, we analyze the problem of deciding if the display sets of two phylogenetic networks have a tree in common. Surprisingly, this problem turns out to be NP-complete even for two temporal normal networks. Second, we investigate the question of whether or not the display sets of two phylogenetic networks are equal. While we recently showed that this problem is polynomial-time solvable for a normal and a tree-child network, it is computationally hard in the general case. In establishing hardness, we show that the problem is contained in the second level of the polynomial-time hierarchy. Specifically, it is $\Pi_2^P$-complete. Along the way, we show that two other problems are also  $\Pi_2^P$-complete, one of which being a generalization of {\sc Tree-Containment}.
\end{abstract}

\maketitle

\section{Introduction}
In trying to disentangle the evolutionary history of species, phylogenetic networks, which are leaf-labeled directed acyclic graphs, are becoming increasingly important. From a biological as well as from a mathematical viewpoint, phylogenetic networks are often regarded as a tool to summarize a collection of conflicting phylogenetic trees. Due to processes such as hybridization and lateral gene transfer, the evolution at the species-level is not necessarily tree-like. Nevertheless, individual genes or parts thereof are usually assumed to evolve in a tree-like way. It is consequently of interest to construct phylogenetic networks that embed a collection of phylogenetic trees or, reversely, summarize the phylogenetic trees that are embedded in a given phylogenetic network. These and related types of problems have recently attracted considerable attention from the mathematical community as they lead to a number of challenging questions. One of the most studied questions in this context is called {\sc Tree-Containment}. Given a phylogenetic network $\cN$ and a  phylogenetic tree $\cT$, this problem asks whether or not $\cN$ embeds $\cT$. While {\sc Tree-Containment} is NP-complete in general~\cite{kanj08}, it has been shown to be polynomial-time solvable for several popular classes of phylogenetic networks, e.g. so-called tree-child and reticulation-visible networks~\cite{bordewich16,gunawan17,iersel10}. Currently, the fastest algorithm that solves {\sc Tree-Containment} for these latter types of networks has a running time that is linear in the size of $\cN$ and, hence, linear in the number of leaves of $\cN$~\cite{weller18}.

Pushing {\sc Tree-Containment} into a novel direction, Gunawan et al.~\cite{gunawan17} have recently posed the question of how one can check if two reticulation-visible networks embed the same set of phylogenetic trees. Since the number of trees that a phylogenetic network $\cN$ embeds grows exponentially with the number $k$ of vertices in $\cN$ whose in-degree is at least two, there is no immediate check that can be performed in polynomial time.  In particular, the number of phylogenetic trees that $\cN$ embeds is bounded above by $2^k$, and it was shown independently in~\cite[Theorem 1]{iersel10} and~\cite[Corollary 3.4]{willson12} that this upper bound is sharp for the class of normal networks. 

Referring to the collection of phylogenetic trees that a given phylogenetic network embeds as its {\it display set} (formally defined in Section~\ref{sec:prelim}), we investigate two questions that naturally arise in comparing the display sets of two phylogenetic networks. The first question asks if the display sets of two phylogenetic networks have a common element. We call this problem {\sc Common-Tree-Containment} and show in Section~\ref{sec:CTC} that it is NP-complete even when the two input networks are both temporal and normal. Strikingly, the class of temporal and normal networks is a strict subclass of the class of tree-child and, hence, reticulation-visible networks for which {\sc Tree-Containment} is polynomial-time solvable. The second problem, which we refer to as {\sc Display-Set-Equivalence}, is the problem of Gunawan et al.~\cite{gunawan17} mentioned above that asks, without restricting to a particular class of phylogenetic networks, if the display sets of two networks are equal. While we recently showed that this problem has a polynomial-time algorithm for when the input consists of a normal and a tree-child network~\cite{doecker}, we show in Section~\ref{sec:DSE} that the problem is computationally hard for two arbitrary phylogenetic networks. Specifically, we show that {\sc Display-Set-Equivalence} is $\Pi_2^P$-complete or, in other words, complete for the second level of the polynomial-time hierarchy\cite{stockmeyer76}. Intuitively, this problem is therefore much harder to solve than any NP-complete or co-NP-complete problem. In establishing the result, we also show that deciding if the display set of one phylogenetic network is contained in the display set of another network is $\Pi_2^P$-complete.

The paper is organized as follows. The next section contains preliminaries that are used throughout the paper, formal statements of the decision problems that are mentioned in the previous paragraph, and some relevant details about the polynomial-time hierarchy. Section~\ref{sec:CTC} establishes NP-completeness of {\sc Common-Tree-Containment} and Section~\ref{sec:DSE} establishes $\Pi_2^P$-completeness of {\sc Display-Set-Equivalence}. Lastly, Section~\ref{sec:conclu} contains some concluding remarks and highlights three corollaries that follow from the  results in Sections~\ref{sec:CTC}.

\section{Preliminaries}\label{sec:prelim}
This section provides notation and terminology that is used in the remaining sections. Throughout this paper, $X$ denotes a non-empty finite set. Let $G$ be a directed acyclic graph. For two distinct vertices $u$ and $v$ in $G$, we say that $u$ is an {\it ancestor} of $v$ and $v$ is a {\it descendant} of $u$, if there is a directed path from $u$ to $v$ in $G$. If  $(u,v)$ is an edge in $G$, then $u$ is a {\it parent} of $v$ and $v$ is a {\it child} of $u$. Moreover, a vertex of $G$ with in-degree one and out-degree zero is a {\it leaf} of $G$.

\noindent {\bf Phylogenetic networks and trees.} A {\it rooted binary phylogenetic network $\cN$ on $X$} is a (simple) rooted acyclic digraph that satisfies the following properties:
\begin{enumerate}[(i)]
\item the (unique) root has out-degree two,
\item the set $X$ is the set of vertices of out-degree zero, each of which has in-degree one, and
\item all other vertices have either in-degree one and out-degree two, or in-degree two and out-degree one.
\end{enumerate}
The set $X$ is the {\it leaf set} of $\cN$. Furthermore, the vertices of in-degree one and out-degree two are {\it tree vertices}, while the vertices of in-degree two and out-degree one are {\it reticulations}. An edge directed into a reticulation is called a {\it reticulation edge} while each non-reticulation edge is called a {\it tree edge}. 

Let $\cN$ be a rooted binary phylogenetic network on $X$. If $\cN$ has no reticulations, then $\cN$ is said to be a {\it rooted binary phylogenetic $X$-tree}. To ease reading and since all phylogenetic networks considered in this paper are rooted and binary, we refer to a rooted binary phylogenetic network (resp. a rooted binary phylogenetic tree) simply as a {\it phylogenetic network} (resp. {\it a phylogenetic tree}). 

Now let $\cT$ be a phylogenetic $X$-tree. If $Y=\{y_1, y_2, \ldots, y_m\}$ is a subset of $X$, then $\cT[-y_1,y_2,\ldots,y_m]$ and, equivalently, $\cT|(X-Y)$ denote the phylogenetic tree with leaf set $X-Y$ that is obtained from the minimal rooted subtree of $\cT$ that connects all leaves in $X-Y$ by suppressing all vertices of in-degree one and out-degree one.

\noindent {\it Remark.} Throughout the paper, we frequently detail constructions of phylogenetic networks. To this end, we sometimes need labels of internal vertices. Their only purpose is to make references. Indeed, they should not be regarded as genuine labels as those used for the leaves of a phylogenetic network.

\noindent {\bf Classes of phylogenetic networks.} Let $\cN$ be a phylogenetic network on $X$ with vertex set $V$. An edge $e=(u,v)$ is a {\it shortcut} if there is a directed path from $u$ to $v$ whose set of edges does not contain $e$. A vertex $v$ of $\cN$ is called {\it visible} if there exists a leaf $\ell\in X$ such that each directed path from the root of $\cN$ to $\ell$ passes through $v$. Now $\cN$ is {\it reticulation-visible} if each reticulation in $\cN$ is visible, and  $\cN$ is {\it tree-child} if each non-leaf vertex in $\cN$ has a child that is a leaf or a tree vertex. Lastly, $\cN$ is {\it normal} if it is tree-child and does not contain any shortcuts. Clearly, by definition, each normal network is also tree-child. Furthermore, it follows from the next well-known equivalence result~\cite{cardona09} that each tree-child network is also reticulation-visible.

\begin{lemma}
Let $\cN$ be a phylogenetic network. Then $\cN$ is tree-child if and only if each vertex of $\cN$ is visible.
\end{lemma}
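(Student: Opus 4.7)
I would prove the two implications separately.

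For the forward direction, suppose $\cN$ is tree-child and let $v$ be any vertex. The plan is to descend from $v$ along a ``tree path'' $v = v_0, v_1, \ldots, v_k = \ell$ in which each $v_{i+1}$ is a child of $v_i$ that is either a leaf or a tree vertex; the tree-child property ensures such a child exists at every non-leaf step, so the sequence terminates at a leaf $\ell$. Each $v_i$ with $i \geq 1$ has in-degree one, so its unique parent in $\cN$ is $v_{i-1}$. Tracing an arbitrary directed path from the root $\rho$ to $\ell$ backwards and repeatedly invoking this parent-uniqueness forces the path to visit $v_{k-1}, v_{k-2}, \ldots, v_0 = v$ in turn, so $v$ is visible via $\ell$. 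Leaves and the root are visible trivially.

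For the converse I would argue the contrapositive: assuming $\cN$ is not tree-child, I exhibit a vertex that is not visible. Pick a non-leaf vertex $v$ whose children are all reticulations. The goal is to produce, for each leaf $\ell$ descended from $v$, a path from $\rho$ to $\ell$ that bypasses $v$. The key combinatorial step is to show that at least one reticulation child $r$ of $v$ has its second parent $u$ outside the descendants of $v$. If $v$ is itself a reticulation with unique child $r$ whose second parent were a descendant of $v$, that parent would be a descendant of $r$, creating a directed cycle; and if $v$ is a tree vertex or the root with reticulation children $r_1, r_2$ whose second parents $u_1, u_2$ are both descended from $v$, then acyclicity forces $u_1$ to be a descendant of $r_2$ and $u_2$ a descendant of $r_1$, and splicing yields the cycle $r_1 \to \cdots \to u_2 \to r_2 \to \cdots \to u_1 \to r_1$, a contradiction.

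With such $r$ and $u$ in hand, the path $\rho \to \cdots \to u \to r \to \cdots \to \ell$ avoids $v$ whenever $\ell$ is descended from $r$. To handle leaves descended from $v$ via the other reticulation child $r'$ (when $v$ has two children), either the second parent $u'$ of $r'$ also avoids the descendants of $v$, yielding an analogous path, or $u'$ is descended from $v$; in the latter case, acyclicity forces $u'$ to be a descendant of $r$, making $r'$ itself a descendant of $r$, so every leaf descended from $r'$ is already descended from $r$ and is thus covered. Hence no leaf witnesses visibility of $v$.

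I expect the forward direction to be routine, hinging only on the observation that vertices of in-degree one have unique parents. The more delicate step is the acyclicity argument in the converse, where one must rule out both second parents simultaneously lying among the descendants of $v$ and then verify that the chosen reticulation $r$ really does capture every descendant leaf of $v$, not merely those in its own subnetwork.
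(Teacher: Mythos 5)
Your proof is correct, but note that the paper offers no proof of this lemma to compare against: it is quoted as a well-known equivalence with a citation to Cardona et al.~\cite{cardona09}. Your forward direction is the standard argument (descend from $v$ along tree-vertex-or-leaf children to a leaf $\ell$; every vertex on that path below $v$ has in-degree one, so any root-to-$\ell$ path is forced back up through $v$), and your converse, via the contrapositive and the acyclicity/splicing argument, is sound. Two small touch-ups would make it airtight. First, in the splicing step, acyclicity only forces $u_1$ to be \emph{equal to or} a descendant of $r_2$ (for instance, $r_2$ itself could be the second parent of $r_1$); the spliced closed walk exists in either case, so the contradiction survives, but the claim as you phrase it is slightly too strong, and the same caveat applies to the later step where you conclude that $u'$ is a descendant of $r$ (if $u' = r$, then $r'$ is a child of $r$ and the conclusion still holds). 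Second, the root deserves an explicit word: the root is always visible, so your contrapositive would collapse if the tree-child-violating vertex $v$ could be the root. It cannot: since every non-root vertex is a descendant of the root, your own cycle argument shows that a root with two reticulation children is impossible, so the violating $v$ is never the root; this matters because your bypassing path from $\rho$ to $u$ avoiding $v$ exists precisely because $u \neq v$ is not a descendant of $v$, which is vacuous when $v$ is the root. Finally, your implicit reduction to leaves below $v$ is legitimate---a leaf that is not a descendant of $v$ admits no root-to-leaf path through $v$ at all---but it is worth one sentence in the final write-up.
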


\noindent Thus, the class of normal networks is a subclass of tree-child networks. Furthermore, if there exists a map $t:V\rightarrow {\mathbb R}^{+}$ that assigns a time stamp to each vertex of $\cN$ and satisfies the following two properties:
\begin{enumerate}[(i)]
\item $t(u)=t(v)$ whenever $(u,v)$ is a reticulation edge and 
\item  $t(u)<t(v)$ whenever $(u,v)$ is a tree edge,
\end{enumerate}
then we say that $\cN$ is {\it temporal}, in which case we call $t$ a {\it temporal labeling} of $\cN$. Note that, although normal networks have no shortcuts, a normal network need not be temporal.
Tree-child, normal, and temporal networks were first introduced by Cardona et al.~\cite{cardona09}, Willson~\cite{willson10}, and Moret et al.~\cite{moret04}, respectively. 

\noindent {\bf Caterpillars.} Let $\cC$ be a phylogenetic tree with leaf set $\{\ell_1,\ell_2,\ldots,\ell_n\}$. Furthermore, for each $i\in\{1,2,\ldots,n\}$ let $p_i$ denote the parent of $\ell_i$. Then $\cC$ is called a {\it caterpillar} if $n\geq 2$ and the elements in the leaf set of $\cC$ can be ordered, say $\ell_1, \ell_2, \ldots, \ell_n$, so that $p_1=p_2$ and, for all $i\in \{3, 4, \ldots, n\}$, we have $(p_i, p_{i-1})$ as an edge in $\cC$. In this case, we denote $\cC$ by $(\ell_1, \ell_2, \ldots, \ell_n)$. Additionally, we say that a phylogenetic $X$-tree $\cT$ {\it contains a caterpillar $\cC=(\ell_1,\ell_2,\ldots,\ell_n)$} if $\cT$ has a subtree that is a subdivision of $\cC$.

\begin{figure}[t]
\center
\includegraphics[width=.8\textwidth]{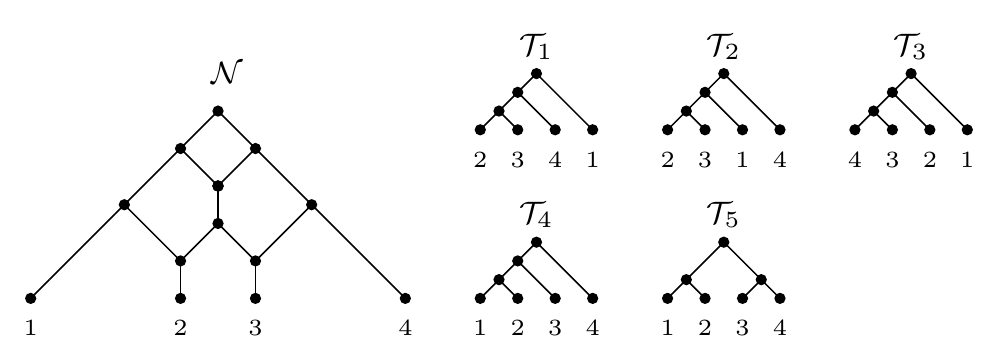}
\caption{A phylogenetic network $\cN$ and the display set of $\cN$ that consists of the five trees shown on the right-hand side.}
\label{fig:display}
\end{figure}

\noindent {\bf Displaying.} Let $\cN$ be a phylogenetic network on $X$ and let $\cT$ be a phylogenetic $Y$-tree such that $Y\subseteq X$. 
Then $\cN$ {\it displays} $\cT$ if, up to suppressing vertices of in-degree one and out-degree one, $\cT$ can be obtained from $\cN$ by deleting edges and vertices, in which case, the edge set, denoted by $E_\cT$, of the resulting acyclic directed graph is called an  {\it embedding} of $\cT$ in $\cN$. If $\cN$ displays $\cT$, note that the root of an embedding of $\cT$ in $\cN$ does not necessarily coincide with the root of $\cN$. In fact, throughout this paper, we impose that the root of an embedding has in-degree zero and out-degree two.
Moreover, the {\it display set} of $\cN$, denoted by $T(\cN)$, consists of all phylogenetic $X$-trees that are displayed by $\cN$. As mentioned in the introduction, the size of $T(\cN)$ is bounded above by $2^k$, where $k$ is the number of reticulations in $\cN$. To illustrate, Figure~\ref{fig:display} shows a phylogenetic network $\cN$ with $T(\cN)=\{\cT_1,\cT_2,\ldots,\cT_5\}$, where the five trees in $T(\cN)$ are shown on the right-hand side of the same figure. In this as well as in all other figures throughout the paper, edges are directed downwards.

Again, let $\cN$ be a phylogenetic network on $X$, and let $S$ be a subset of the edges of $\cN$. Then $S$ is a {\it switching} of $\cN$ if, for each reticulation $v$ of $\cN$, $S$ contains precisely one of the two reticulation edges that are directed into $v$. Now, let $S$ be a switching of $\cN$. If we delete each reticulation edge in $\cN$ that is not in $S$ and, repeatedly, suppress each resulting vertex with in-degree one and out-degree one, delete each vertex with in-degree one and out-degree zero that is not in $X$, and delete each vertex with in-degree zero and out-degree one, we obtain a phylogenetic $X$-tree $\cT$, in which case, we say that $S$ {\em yields} $\cT$. Note that $\cT$ is displayed by $\cN$. Conversely, observe that, if $\cT$ is a phylogenetic $X$-tree that is displayed by $\cN$ , then there exists a switching of $\cN$ that yields $\cT$. We summarize this in the following observation.

\begin{observation}
A phylogenetic network $\cN$ on $X$ displays a phylogenetic $X$-tree $\cT$ if and only if there exists a switching of $\cN$ that yields $\cT$.
\end{observation}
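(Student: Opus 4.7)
The plan is to prove both directions by relating switchings and embeddings, since both are specific subsets of the edge set of $\cN$ and the displayed tree is obtained by the same cleanup procedure (suppressing in-degree-one out-degree-one vertices, deleting stray orphans and pendant non-leaves).

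For the forward implication (switching yields $\Rightarrow$ displayed), suppose $S$ is a switching of $\cN$ and $\cT$ is the phylogenetic $X$-tree obtained by the procedure described just before the observation. Because $\cT$ is produced from $\cN$ by deleting a subset of reticulation edges and then iteratively deleting and suppressing vertices, it is, by definition of displaying, a tree displayed by $\cN$; the edges of $\cN$ that survive constitute an embedding of $\cT$. The only thing to verify is that the resulting graph really is a phylogenetic $X$-tree, which follows from the fact that after the deletions each reticulation has in-degree exactly one (so it becomes a degree-two vertex that is suppressed) and every leaf of $X$ remains reachable from the root because every vertex in $\cN$ lies on some directed path ending at a leaf in $X$.

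For the reverse implication (displayed $\Rightarrow$ switching yields), assume $\cN$ displays $\cT$ and let $E_\cT$ be an embedding. For each reticulation $v$ of $\cN$, at most one of the two edges directed into $v$ can belong to $E_\cT$, because otherwise $v$ would have in-degree two in the subgraph induced by $E_\cT$ and could not be suppressed, contradicting that the subgraph becomes a tree. Define a switching $S$ of $\cN$ as follows: for every reticulation $v$ such that exactly one of its two incoming edges lies in $E_\cT$, put that edge into $S$; for every reticulation $v$ for which neither incoming edge is in $E_\cT$, pick either one of the two incoming edges arbitrarily and put it into $S$. By construction $S$ is a switching.

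It remains to show that $S$ yields $\cT$, which I expect to be the main obstacle. The idea is that every edge and vertex removed by the switching procedure but not already removed in the passage from $E_\cT$ to $\cT$ must be eventually swept away by the cleanup steps. Concretely, let $E'$ be the edge set of $\cN$ after deleting the reticulation edges not in $S$. Then $E_\cT \subseteq E'$, and the only edges of $E'\setminus E_\cT$ are the "arbitrarily chosen" incoming edges at those reticulations $v$ for which $E_\cT$ contains no incoming edge of $v$. For each such $v$, none of the descendants of $v$ are reachable from the root in $E_\cT$, so by an induction following the reticulations in a topological order one shows that the cleanup steps (iteratively suppressing in-degree-one out-degree-one vertices, deleting non-leaf vertices of out-degree zero, and deleting vertices of in-degree zero other than the root) eventually delete the whole component hanging off the arbitrarily selected edges. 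What remains after cleanup is precisely the subgraph corresponding to $E_\cT$ with its own degree-two vertices suppressed, i.e.\ $\cT$. Thus $S$ yields $\cT$, completing the proof.
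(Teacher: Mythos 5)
Your architecture is the right one (and matches the level at which the paper treats this statement, namely as a routine consequence of the two definitions: the forward direction is immediate, and the reverse direction is exactly ``read the switching off the embedding''). The forward direction and the observation that $E_\cT$ contains at most one incoming edge of each reticulation are both fine. However, your reverse direction rests on a false accounting: it is not true that the only edges of $E'\setminus E_\cT$ are the arbitrarily chosen incoming edges at reticulations untouched by $E_\cT$. The set $E'\setminus E_\cT$ in general also contains many \emph{tree} edges of $\cN$: the entire path from the root of $\cN$ down to the root of the embedding (the paper explicitly notes these two roots need not coincide) together with whatever hangs off that path, and, below the embedding root, tree-edge branches that leave $E_\cT$ and dead-end at tree vertices both of whose children are reticulations resolved via their other parents. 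Your induction ``following the reticulations in a topological order'' only sweeps away the subtrees sitting below the arbitrarily chosen reticulation edges, so as written it neither disposes of this remaining material nor shows that the root of the yielded tree is the embedding root rather than the root of $\cN$.

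The gap is repairable by a cleaner global argument that makes the case analysis unnecessary. After deleting the reticulation edges not in $S$, every non-root vertex of the residual graph $G'$ has in-degree exactly one, so $G'$ is an out-tree on the full vertex set of $\cN$, and by construction $E_\cT\subseteq G'$. Since directed paths in an out-tree are unique, the path in $G'$ from the embedding root $\rho$ to any $x\in X$ is precisely the path in $E_\cT$; hence no vertex of $G'$ outside $E_\cT$ that lies below $\rho$ has a descendant in $X$, and no vertex hanging off the chain from the root of $\cN$ to $\rho$ has one either, because every element of $X$ lies below $\rho$. Consequently the rule ``delete vertices of in-degree one and out-degree zero not in $X$'' erases all of these subtrees from the bottom up (this uniformly handles your chosen edges, the tree-edge dead ends, and the side branches above $\rho$), the rule ``delete vertices of in-degree zero and out-degree one'' then trims the chain down exactly to $\rho$, since $\rho$ has out-degree two in $E_\cT$, and suppressing the surviving vertices of in-degree one and out-degree one returns exactly $\cT$. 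As a minor point in the same vein, your justification of the forward direction (``every vertex of $\cN$ lies on a directed path to a leaf in $X$'') is a non sequitur; reachability of every leaf after the deletions follows from the same in-degree count, namely that $G'$ is an out-tree rooted at the root of $\cN$.
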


\noindent{\bf Problem statements.} {\sc Tree-Containment} is a well known problem in the study of phylogenetic networks and its computational complexity has extensively been analyzed for various  network classes. In the language of this paper, it can be stated as follows.

\noindent {\sc Tree-Containment}\\
\noindent{\bf Input.} A phylogenetic $X$-tree $\cT$ and phylogenetic network $\cN$ on $X$.\\
\noindent{\bf Question.} Is $\cT\in T(\cN)$?\\

\noindent While {\sc Tree-Containment} is concerned with a single display set, it is natural to compare display sets across phylogenetic networks, e.g. in the context of comparing networks. To make a first step in this direction, the focus of this paper are the following three decision problems that compare the display sets of two phylogenetic networks.

\noindent {\sc Common-Tree-Containment}\\
\noindent{\bf Input.} Two phylogenetic networks $\cN$ and $\cN'$ on $X$.\\
\noindent{\bf Question.} Is $T(\cN)\cap T(\cN')\ne\emptyset$?\\

\noindent {\sc Display-Set-Containment}\\
\noindent{\bf Input.} Two phylogenetic networks $\cN$ and $\cN'$ on $X$.\\
\noindent{\bf Question.} Is $T(\cN)\subseteq T(\cN')$?\\

\noindent {\sc Display-Set-Equivalence}\\
\noindent{\bf Input.} Two phylogenetic networks $\cN$ and $\cN'$ on $X$.\\
\noindent{\bf Question.} Is $T(\cN)=T(\cN')$?\\

\noindent We note that  {\sc Tree-Containment} is a special case of both {\sc Display-Set-Con\-tainment} and  {\sc Common-Tree-Containment}. Hence, NP-hardness of the two latter problems follows immediately for when $\cN$ and $\cN'$ are two arbitrary phylogenetic networks. Nevertheless, as we will see in Sections~\ref{sec:CTC} and~\ref{sec:DSE}, we pinpoint the complexity of {\sc Common-Tree-Containment} and  {\sc Display-Set-Containment} exactly. In particular, we will show that (i) {\sc Common-Tree-Containment} is NP-complete even for when $\cN$ and $\cN'$ are both temporal and normal and (ii) {\sc Display-Set-Containment} is complete for the second level of the polynomial-time hierarchy. This last result turns out to be a key ingredient in showing that {\sc Display-Set-Equivalence} is also complete for the second level of the polynomial-time hierarchy.

\noindent {\bf The polynomial hierarchy.} The {\it polynomial-time hierarchy} (or short, {\it polynomial hierarchy})~\cite{garey79,stockmeyer76} consists of a system of complexity classes that are defined recursively and generalize the classes P, NP, and co-NP. In particular, for any integer $k\geq 0$, referred to as {\it level}, we have
\[\Sigma_0^P = \Pi_0^P = \text{P},\]
\[\Sigma_{k+1}^P = \text{NP}^{\Sigma_k^P}\text{\hspace{3mm}and\hspace{3mm}}\Pi_{k+1}^P = \text{co-NP}^{\Sigma_k^P}.\]
Level-0 of the hierarchy coincides with the class P (i.e. $\Sigma_0^P=\Pi_0^P$) while level-1 coincides with the class NP (i.e. $\Sigma_1^P$) and co-NP (i.e. $\Pi_1^P$), respectively.  
For all $k \geq 0$, it is an open problem whether or not $\Sigma_k^P \neq \Sigma_{k+1}^P$. Specifically, for $k = 0$, this is the fundamental P versus NP problem. If $\Sigma_k^P = \Sigma_{k+1}^P$ or $\Pi_k^P = \Pi_{k+1}^P$ for some $k\geq 0$, then this would result in a collapse of the polynomial hierarchy to the $k$-th level.

In Section~\ref{sec:DSE}, we show that {\sc Display-Set-Containment} and {\sc Display-Set-Equivalence}  are both $\Pi_2^P$-complete. Intuitively, problems that are complete for the second level of the polynomial hierarchy are more difficult than problems that are complete for the first level. Recall that a decision problem is in co-NP if a no-instance can be verified in polynomial time given an appropriate certificate. Now, similar to showing that a problem is co-NP-complete, a proof that establishes $\Pi_2^P$-completeness consists of two steps: (i) show that a problem is in $\Pi_2^P$, and (ii) establish a polynomial-time reduction from a problem that is known to be $\Pi_2^P$-complete to the problem at hand. With regards to (i), a decision problem is {\it in $\Pi_2^P$} if a no-instance can be verified in polynomial time when one is given an appropriate certificate and has access to an NP-{\it oracle}, that is, an oracle that can solve NP-complete problems in constant time.

\section{Hardness of {\sc Common-Tree-Containment}}\label{sec:CTC}

As noted in the introduction, {\sc Tree-Containment} is NP-complete in general, but polynomial-time solvable for several popular classes of phylogenetic networks such as tree-child and reticulation-visible networks. In this section, we show that no such dichotomy holds for {\sc Common-Tree-Containment}. 
In particular, we will show that this problem is NP-complete even if the input consists of two temporal normal networks.
To establish the result, we use a reduction from the classical computational problem 3-SAT.

\noindent {\sc 3-SAT}\\
\noindent{\bf Input.} A set $V=\{v_1,v_2,\ldots,v_n\}$ of variables, and a set $\{C_1,C_2,\ldots,C_m\}$ of clauses such that each clause is a disjunction of exactly three  literals and each literal is an element in $\{v_i,\bar{v}_i:i\in\{1,2,\ldots,n\}\}$.\\
\noindent{\bf Question.} Does there exist a truth assignment for $V$ that satisfies each clause $C_j$ with $j\in\{1,2,\ldots,m\}$?\

Let $I$ be an instance of {\sc 3-SAT}, and let $C_j=(x_j^1\vee x_j^2\vee x_j^3)$ be a clause of $I$ for $j\in\{1,2,\ldots,m\}$. Then, for some indices $k$, $k'$, and $k''$ in $\{1,2,\ldots,n\}$, we have $x_j^1\in\{v_k,\bar{v}_k\}$, $x_j^2\in\{v_{k'},\bar{v}_{k'}\}$, and $x_j^3\in\{v_{k''},\bar{v}_{k''}\}$. 
Without loss of generality, we impose the following two restrictions on $I$:
\begin{enumerate}
\item [(R1)] for each $v_i\in V$ with $i\in\{1,2,\ldots,n\}$, at most one element in $\{v_i,\bar{v}_i\}$ is a literal of $C_j$ and
\item [(R2)] $k<k'<k''$.
\end{enumerate}
Now, for each clause $C_j$, we construct the two clause gadgets $G_j^A$ and $G_j^B$ that are shown in Figure~\ref{fig:CTC-clause-gadgets}. We next establish a simple lemma.

\begin{figure}[t]
\center
\includegraphics[width=.75\textwidth]{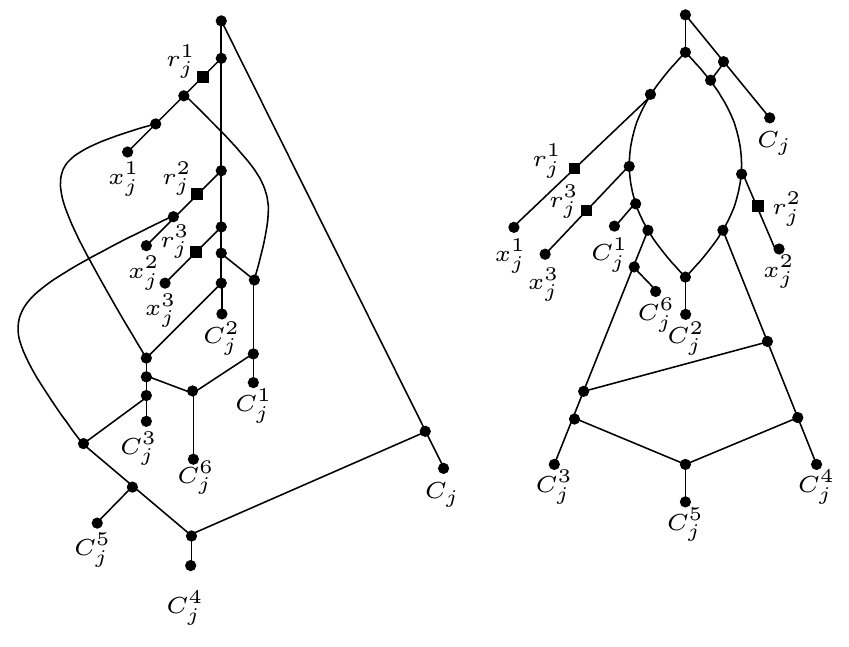}
\caption{For a clause $C_j=(x_j^1\vee x_j^2\vee x_j^3)$, the clause gadget $G_j^A$ (left) and the clause gadget $G_j^B$ (right). Leaves are bijectively labeled with the elements in $\{C_j,C_j^1,C_j^2,\ldots,C_j^6,x_j^1,x_j^2,x_j^3\}$. Furthermore, each gadget has three vertices of in-degree one and out-degree one indicated by small squares labeled $r_j^1$, $r_j^2$, and $r_j^3$.}
\label{fig:CTC-clause-gadgets}
\end{figure}

\begin{lemma}\label{lem:clause-gadget}
Let $G_j^A$ and $G_j^B$ be the two clause gadgets that are shown in Figure~\ref{fig:CTC-clause-gadgets}. Obtain two phylogenetic networks $\cG_j^A$ and $\cG_j^B$ from $G_j^A$ and $G_j^B$, respectively, by suppressing the three vertices  $r_j^1$, $r_j^2$, and $r_j^3$ of in-degree one and out-degree one. Then $T(\cG_j^A)\cap T(\cG_b^B)=\emptyset$.
\end{lemma}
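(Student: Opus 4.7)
The plan is a direct finite-case analysis, since each clause gadget has only a handful of reticulations. Let $k_A$ and $k_B$ be the numbers of reticulations in $\cG_j^A$ and $\cG_j^B$, respectively. By the observation just before the lemma, every element of $T(\cG_j^A)$ arises from some switching of $\cG_j^A$, and similarly for $\cG_j^B$. Since there are at most $2^{k_A}$ and $2^{k_B}$ switchings, both display sets are of bounded size and can be listed explicitly by reading off the tree yielded by each switching.

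First I would enumerate the display set of $\cG_j^A$ by fixing a switching and tracing, for each leaf in $\{C_j, C_j^1,\dots,C_j^6,x_j^1,x_j^2,x_j^3\}$, the unique directed path from the root of the resulting tree down to that leaf; in doing so I would pay attention to the three suppressible vertices $r_j^1,r_j^2,r_j^3$, which play no structural role here but will be the attachment points when the gadgets are combined in Section~\ref{sec:CTC}. I would repeat this for $\cG_j^B$. To avoid a brute tree-by-tree comparison, the key is to identify a small diagnostic subset $Y$ of leaves and a combinatorial invariant preserved under restriction to $Y$: for example, the topology of each displayed tree restricted to $Y=\{C_j,x_j^1,x_j^2,x_j^3\}$, or the set of cherries among the auxiliary leaves $C_j^1,\dots,C_j^6$. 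The goal is to show that the multiset of restrictions $\{\cT|Y : \cT\in T(\cG_j^A)\}$ is disjoint from $\{\cT|Y : \cT\in T(\cG_j^B)\}$. Since two trees that agree on all leaves certainly agree on $Y$, disjointness of restrictions implies $T(\cG_j^A)\cap T(\cG_j^B)=\emptyset$.

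To find such an invariant, I would inspect the two gadgets for a structural asymmetry that no switching can erase. A typical pattern in reductions of this kind is that gadget $A$ is built so that, in every switching, at least one literal leaf $x_j^i$ sits in a cherry (or on a specific caterpillar subtree) with $C_j$, while gadget $B$ is built so that no switching can place any $x_j^i$ in that position with $C_j$ — instead the literal leaves lie with some of the $C_j^\ell$'s. If such a dichotomy holds, then the restrictions to, say, $\{C_j,x_j^1,x_j^2,x_j^3\}$ coming from $\cG_j^A$ all share a feature absent from every restriction coming from $\cG_j^B$, and the lemma follows immediately.

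The main obstacle is purely bookkeeping: enumerating the switchings cleanly and picking the right diagnostic subset so that the argument fits on one page rather than spreading over a table with all $2^{k_A}+2^{k_B}$ entries. I expect that the gadgets in Figure~\ref{fig:CTC-clause-gadgets} have been constructed precisely so that a tiny invariant — most likely the cherry containing $C_j$, or a single triple on $\{x_j^1,x_j^2,x_j^3\}$ — already distinguishes the two display sets, reducing the whole proof to verifying that invariant in each of the two gadgets separately.
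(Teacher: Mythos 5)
Your strategy is essentially the paper's. The published proof is a single observation: every tree in $T(\cG_j^A)$ contains the caterpillar $(x_j^2,x_j^3,x_j^1)$, whereas every tree in $T(\cG_j^B)$ contains the caterpillar $(x_j^1,x_j^3,x_j^2)$; since these two orderings of the same three leaves induce incompatible rooted triples, no tree can lie in both display sets. This is exactly the second candidate invariant you name --- a single caterpillar on $\{x_j^1,x_j^2,x_j^3\}$ --- and your reduction step (disjointness of the restrictions to a subset $Y$ implies disjointness of the full display sets) is sound. Two remarks. First, your leading guess, a cherry or caterpillar position involving $C_j$, is not what the gadgets enforce: the leaf $C_j$ plays no role in the distinguishing invariant, which lives entirely on the literal leaves. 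Second, and more substantively, your argument remains conditional (``if such a dichotomy holds''): the entire content of the lemma's proof is precisely the verification you defer, namely checking against Figure~2 that no choice of reticulation edges at $r_j^1,r_j^2,r_j^3$ in $\cG_j^A$ can produce a displayed tree avoiding $(x_j^2,x_j^3,x_j^1)$, and symmetrically for $\cG_j^B$. Since you could not see the figure, this gap is unavoidable in a blind attempt, but it is the lemma; everything else in your plan is scaffolding. Once that check is made, the full $2^{k_A}+2^{k_B}$ enumeration of switchings and the multiset bookkeeping are unnecessary --- the gadgets are built so that the caterpillar order of the literal leaves is invariant under every switching, which is why the paper's proof is two lines.
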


\begin{proof}
To see that $T(\cG_j^A)\cap T(\cG_j^B)=\emptyset$, observe that each tree in $T(\cG_j^A)$ contains the caterpillar $(x_j^2,x_j^3,x_j^1)$, whereas each tree in $T(\cG_j^B)$ contains the caterpillar $(x_j^1,x_j^3,x_j^2)$.
\end{proof}

Let $S=(s_1,s_2,\ldots,s_n)$ be an arbitrary tuple, and let $r$ be an element that is not contained in $S$. We write $(r)||S$ to denote the tuple $(r,s_1,s_2,\ldots,s_n)$  obtained by concatenating $r$ and $S$. With this definition in hand, we are now in a position to establish the main result of this section.

\begin{theorem}\label{t:CTC}
{\sc Common-Tree-Containment} is \emph{NP}-complete when the input consists of two temporal normal networks.
\end{theorem}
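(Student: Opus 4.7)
The plan is to prove both membership in \emph{NP} and \emph{NP}-hardness, using a polynomial-time reduction from {\sc 3-SAT} that is driven by the two clause gadgets $G_j^A$ and $G_j^B$ together with Lemma~\ref{lem:clause-gadget}.

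For membership in \emph{NP}, a natural certificate is a pair $(S,S')$, where $S$ is a switching of $\cN$ and $S'$ is a switching of $\cN'$. Each of $S$ and $S'$ has size linear in the size of the corresponding network, and the trees $\cT$ and $\cT'$ that they yield can be constructed in polynomial time and then compared as phylogenetic $X$-trees in polynomial time. By the observation preceding the problem statements, $T(\cN)\cap T(\cN')\neq\emptyset$ if and only if some such pair with $\cT=\cT'$ exists, so {\sc Common-Tree-Containment} lies in \emph{NP} for arbitrary phylogenetic networks.

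For \emph{NP}-hardness, I would reduce from {\sc 3-SAT}. Given an instance $I$ satisfying restrictions (R1) and (R2), construct two phylogenetic networks $\cN_I$ and $\cN_I'$ on a common leaf set $X$ containing the labels of all $m$ clause gadgets together with a few auxiliary leaves. Both networks share the same backbone along the $n$ variables, assembled as a caterpillar-style chain using the concatenation $(r)||S$ introduced above, with a variable gadget inserted at each position; only the clause gadgets differ between the two networks, $\cN_I$ using the gadgets $G_j^A$ and $\cN_I'$ using the gadgets $G_j^B$, each spliced in via its three marker vertices $r_j^1,r_j^2,r_j^3$. Each variable gadget contains a reticulation whose switching represents a truth value of the corresponding variable and whose reticulation edges reach into the clause gadgets that contain that variable as a literal. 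Because the variable gadgets are identical in the two networks, the switching of a common tree within them is forced, and the only freedom lies in the clause-gadget switchings.

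The correctness argument hinges on Lemma~\ref{lem:clause-gadget}: inside a clause gadget, all three literal leaves $x_j^1, x_j^2, x_j^3$ are forced into the caterpillar $(x_j^2,x_j^3,x_j^1)$ in $\cG_j^A$ and into $(x_j^1,x_j^3,x_j^2)$ in $\cG_j^B$. A common tree must therefore, for every clause $C_j$, have at least one of the three literal leaves rerouted by its variable gadget out of the local clause structure, and a literal leaf is rerouted precisely when the corresponding literal is set to true. Hence a common tree exists if and only if some truth assignment satisfies every clause, that is, if and only if $I$ is satisfiable. The main obstacle I expect to face is verifying that $\cN_I$ and $\cN_I'$ are simultaneously temporal and normal: normality demands tree-child plus the absence of shortcuts, while temporality demands a globally consistent vertex labeling. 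Keeping these compatible with the reticulations needed by both the clause gadgets of Figure~\ref{fig:CTC-clause-gadgets} and the variable gadgets is delicate; the ordering restriction (R2) on variables inside each clause, together with the caterpillar-style backbone, is what should make a coherent temporal labeling possible, and the marker vertices $r_j^k$ are what allow the variable gadgets to attach without introducing shortcuts.
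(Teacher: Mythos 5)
Your NP-membership argument is sound; the pair-of-switchings certificate you describe is essentially what the paper itself uses in its concluding corollary to place {\sc Common-Tree-Containment} in NP for \emph{arbitrary} networks (the main proof instead cites the polynomial-time algorithm for {\sc Tree-Containment} on normal networks). Your hardness reduction also starts along the paper's lines: a 3-SAT reduction with a variable side and a clause side, $G_j^A$ spliced into one network and $G_j^B$ into the other via $r_j^1,r_j^2,r_j^3$, and Lemma~\ref{lem:clause-gadget} forcing, for each clause, at least one literal leaf to be picked from the variable side (the paper's observation (O1)).

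The genuine gap is in your variable side, and it breaks the soundness direction of the reduction. You posit variable gadgets that are ``identical in the two networks,'' each containing one reticulation whose switching encodes the truth value. First, a single reticulation per variable cannot carry this information: a reticulation merely selects one of its two incoming edges, whereas the clause-side/variable-side decision is made independently at each reticulation $r_j^\ell$, one per literal \emph{occurrence}; and your claim that identical gadgets ``force'' the switching of a common tree is false---identical subnetworks admit identical sets of switchings and force nothing. Second, and more fundamentally, if the variable sides of $\cN_I$ and $\cN_I'$ coincide, then nothing prevents a common tree from picking a positive occurrence of $v_i$ from the variable side in one clause and a negated occurrence of $v_i$ from the variable side in another clause, so an unsatisfiable formula could still admit a common tree: your asserted correspondence ``rerouted precisely when the literal is true'' has no enforcing mechanism. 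The paper's construction supplies exactly this mechanism by making the variable sides \emph{differ} between the two networks: in Step~(3), for a positive occurrence $x_j^\ell=v_{k}$ the attachment vertex $u_j^\ell$ subdivides the edge into the leaf $v_{k}$ in $\cM$ but the edge into $d_{k}$ in $\cM'$, with the placements swapped for negated occurrences. This mirrored placement yields the paper's observation (O2): a tree displayed by \emph{both} networks can take occurrences of $v_i$ or of $\bar{v}_i$ from the variable side, but never both, because the induced caterpillar order below $v_i$ would have to be consistent with both networks simultaneously; (O1) and (O2) together are what make the extracted truth assignment well defined and satisfying. (Your instinct about the delicacy of temporality and normality is right, and it is exactly restrictions (R1) and (R2) that let the paper build a temporal labeling with $t(r_j^1)<t(r_j^2)<t(r_j^3)$; but without the asymmetric variable-side attachments the reduction fails before that verification becomes relevant.)
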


\begin{proof}
For two normal networks, van Iersel et al.~\cite{iersel10} showed that the running time of {\sc Tree-Containment} is polynomial in the size of this leaf set. Hence, it follows that {\sc Common-Tree-Containment} is in NP for two normal networks.

Let $I$ be an instance of 3-SAT with $n$ variables and $m$ clauses. Using the same notation as in the formal statement of 3-SAT, we construct
two phylogenetic networks $\cN$ and $\cN'$ on
\begin{eqnarray*}
X&=&\{C_j,C_j^1,C_j^2,\ldots,C_j^6,x_j^1,x_j^2,x_j^3:j\in\{1,2,\ldots,m\}\}\cup\\
&&\{v_i:i\in\{1,2,\ldots,n\}\}
\end{eqnarray*}
as follows. Let $\cT$  be the phylogenetic tree obtained by creating a vertex $\rho$, adding an edge that joins $\rho$ with the root of the caterpillar $(v_1,v_2,\ldots,v_n)$, and adding an edge that joins $\rho$ with the root of the caterpillar $(c_1,c_2,\ldots,c_m)$. 
Now, setting $\cM=\cM'=\cT$, let $\cN$ and $\cN'$ be the two phylogenetic networks obtained from $\cM$ and $\cM'$, respectively, by applying the following four-step process.
\begin{enumerate}
\item For all $j\in\{1,2,\ldots,m\}$, replace $c_j$ with $G_j^A$ in $\cM$ and replace $c_j$ with $G_j^B$ in $\cM'$.
\item For all $i\in\{1,2,\ldots,n\}$, subdivide the edge directed into $v_i$ with a new vertex $d_i$ in $\cM$ and $\cM'$.
\item \label{step} For each $j\in\{1,2,\ldots,m\}$ in increasing order, consider $C_j=(x_j^1\vee x_j^2\vee x_j^3)$. Let $v_{k_\ell}$ be the unique element in $V$ such that $x_j^\ell\in\{v_{k_\ell},\bar{v}_{k_\ell}\}$ for each $\ell\in\{1,2,3\}$.
If $x_j^\ell=v_{k_\ell}$, subdivide the edge directed into $v_{k_\ell}$ with a new vertex $u_j^\ell$ in $\cM$ and subdivide the edge directed into $d_{k_\ell}$ with a new vertex $u_j^\ell$ in $\cM'$. Otherwise, subdivide the edge directed into $d_{k_\ell}$ with a new vertex $u_j^\ell$ in $\cM$ and subdivide the edge directed into $v_{k_\ell}$ with a new vertex $u_j^\ell$ in $\cM'$. Add a new edge $(u_j^\ell,r_j^\ell)$ in $\cM$ and $\cM'$.
\begin{figure}[t]
\center
\includegraphics[width=\textwidth]{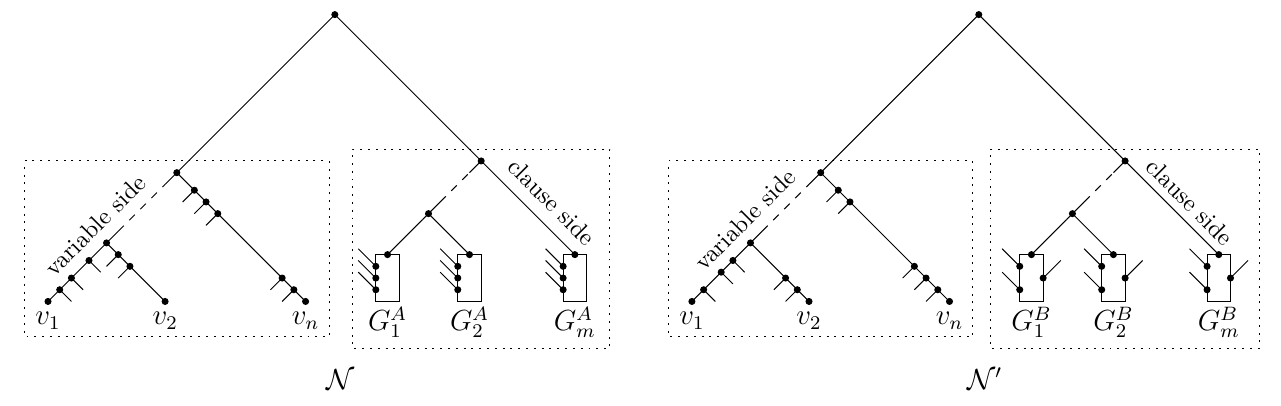}
\caption{Overview of the construction of the two temporal normal networks $\cN$ and $\cN'$ in the proof of Theorem~\ref{t:CTC}. Dangling edges on the clause and variable side of $\cN$ and $\cN'$, respectively, are paired up depending on $I$. For details, see Step (\ref{step}) of the construction.}
\label{fig:CTC-construction}
\end{figure}
\item For each $i\in\{1,2,\ldots,n\}$, suppress the vertex $d_i$ of in-degree one and out-degree one in $\cM$ and $\cM'$.
\end{enumerate}
To illustrate, Figure~\ref{fig:CTC-construction} gives a high-level overview of the construction of $\cN$ and $\cN'$.  Observe that, for each $j\in\{1,2,\ldots,m\}$, the three vertices $r_j^1$, $r_j^2$, and $r_j^3$ in $\cN$ and $\cN'$ are reticulations. 

We next show that $\cN$ and $\cN'$ are both temporal and normal.

\begin{sublemma}\label{subl:one}
Both $\cN$ and $\cN'$ are temporal and normal. 
\end{sublemma}

\begin{proof}
We first show that $\cN$ is temporal and normal. Let \[V_r=\{r_j^\ell:j\in\{1,2,\ldots,m\}\textnormal{ and }\ell\in\{1,2,3\}\}.\] Furthermore, for each $i\in\{1,2,\ldots,n\}$, let $V_i$ consist of all vertices that lie on the unique directed path from the root of $\cN$ to $v_i$, and let \[V_v=\bigcup_{i=1}^n V_i.\] 
We begin by assigning a positive real-valued labeling $t$ to each vertex in $V_v\cup V_r$ as follows.
First, under $t$, each vertex in $V_v$ is assigned a labeling such that the following two properties are satisfied.
\begin{enumerate}[(i)]
\item If $u,v\in V_v$ and $u$ is an ancestor of $v$, then $t(u)<t(v)$.
\item For all $i\in\{1,2,\ldots,n-1\}$, the temporal labeling of each vertex in $V_i$ that is not contained in $V_{i+1}$  is smaller than the minimum temporal labeling over all vertices that are contained in $V_{i+1}$ and not in $V_{i}$ .
\end{enumerate}
By construction of $\cN$, note that such a labeling always exists.
Second, under $t$, each vertex in $V_r$ is assigned the same labeling as its unique parent that is contained in $V_v$.
Because of restrictions (R1) and (R2) that we have imposed on $I$ and the way we have assigned temporal labelings to the vertices in $V_v$, we have \[t(r_j^1)<t(r_j^2)<t(r_j^3)\] for each $j\in\{1,2,\ldots,m\}$. A routine check now shows that $t$ can be extended to a temporal labeling of $\cN$ and, thus, $\cN$ is temporal. 

Now, since $\cN$ is temporal, it follows that $\cN$ has no shortcuts. Hence, to show that $\cN$ is normal, it suffices to show that $\cN$ is tree-child. It is straightforward to check that $\cN$ has no edge $(u,v)$ such that $u$ and $v$ are both reticulations. Hence, each reticulation in $\cN$ has a child that is a tree vertex or a leaf. Furthermore, by construction, each tree vertex of $\cN$ that is a vertex of some $G_j^A$ with $j\in\{1,2,\ldots,m\}$ has a child that is a tree vertex or a leaf. Lastly, for each non-leaf vertex $v$ of $\cN$ that is neither a reticulation nor a vertex of some $G_j^A$, consider a directed path $P$ from $v$ to an element in $\{v_1,v_2,\ldots,v_n,C_1,C_2,\ldots,C_m\}$. By construction, $P$ exists. It is now easily seen that the second vertex of $P$ is a child of $v$ that is either a tree vertex or a leaf. This establishes that $\cN$ is normal. An analogous argument that uses $G_j^B$ instead of $G_j^A$ can be used to show that $\cN'$ is temporal and normal, thereby completing the proof of~(\ref{subl:one}).
\end{proof}

Since the number of vertices of a normal network is polynomial in the size of $X$~\cite{mcdiarmid15} and $|X|=10m+n$, it follows that $\cN$ and $\cN'$  can be constructed in time polynomial in the size of $X$.

\begin{sublemma}\label{slem:claim}
The instance $I$ is a yes-instance if and only if $T(\cN)\cap T(\cN')\ne\emptyset$.
\end{sublemma}

\begin{figure}[t]
\center
\includegraphics[width=\textwidth]{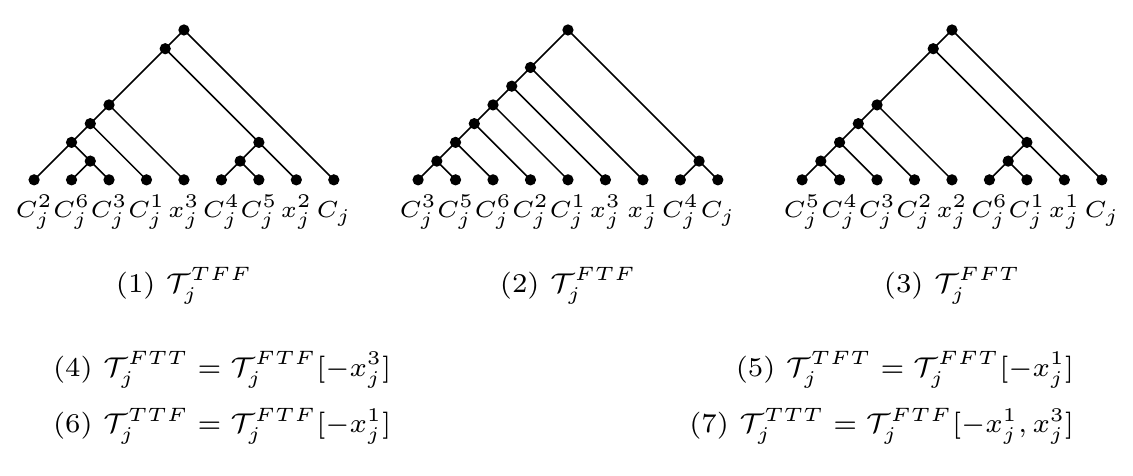}
\caption{The seven trees that are used in the proof of Theorem~\ref{t:CTC}.}
\label{fig:CTC-common-subtrees}
\end{figure}

\begin{proof}
First, suppose that $I$ is a yes-instance. We  construct a {\it variable tree} $\cT_v$ and a {\it clause tree} $\cT_c$ that, joined together, result in a phylogenetic $X$-tree that is displayed by $\cN$ and $\cN'$. Let $\beta:V\rightarrow\{F,T\}$ be a truth assignment that satisfies each clause, and let \[Y=\{x_j^\ell:j\in\{1,2,\ldots,m\}\textnormal{ and } \ell\in\{1,2,3\}\}.\] Furthermore, for each $i\in\{1,2,\ldots,n\}$, let $Y_i$ (resp. $\bar{Y}_i$) be the tuple consisting of the elements in $Y$ that equal $v_i$ (resp. $\bar{v}_i$) such that, for any two elements $x_j^\ell$ and $x_{j'}^{\ell'}$ in $Y_i$ (resp. $\bar{Y}_i$), $x_j^\ell$ precedes $x_{j'}^{\ell'}$ precisely if $j>j'$. By construction, note that the two caterpillars $(v_i)||Y_i$ and $(v_i)||\bar{Y}_i$ are displayed by $\cN$ and $\cN'$. Now, obtain $\cT_v$ from the caterpillar $(v_1,v_2,\ldots,v_n)$ by doing the following for each $i\in\{1,2,\ldots,n\}$. If $\beta(v_i)=T$, replace $v_i$ with the caterpillar $(v_i)||Y_i$; otherwise, replace $v_i$ with the caterpillar $(v_i)|| \bar{Y}_i$. Again, by construction, it is easily checked that $\cT_v$ is displayed by $\cN$ and $\cN'$. We next construct $\cT_c$. Consider a clause $C_j=(x_j^1\vee x_j^2\vee x_j^3)$. For each $\ell\in\{1,2,3\}$, set $z_\ell=T$ if $x_j^\ell$ is satisfied by $\beta$ and, otherwise, set $z_\ell=F$.  Depending on which elements in $\{z_1,z_2,z_3\}$ equal $F$ and $T$, respectively, and noting that there exists some $\ell$ for which $z_\ell=T$, we define the {\it clause tree} $\cT_j^{z_1z_2z_3}$ relative to $C_j$ to be one of the seven trees that are listed in Figure~\ref{fig:CTC-common-subtrees}.  Intuitively, $x_j^\ell$ is a leaf in $\cT_j^{z_1z_2z_3}$ precisely if $z_\ell=F$.  Now, obtain $\cT_c$ from the caterpillar $(c_1,c_2,\ldots,c_m)$ by replacing, for each $j\in\{1,2,\ldots,m\}$, the leaf $c_j$ with the clause tree relative to $C_j$. As $\cT_j^{z_1z_2z_3}$ is displayed by the two phylogenetic networks obtained from $G_j^A$ and $G_j^B$ by suppressing the three vertices $r_j^1$, $r_j^2$, and $r_j^3$ of in-degree one and out-degree one, it follows that  $\cT_j^{z_1z_2z_3}$ is also displayed by $\cN$ and $\cN'$. In turn, this implies that, by construction, $\cT_c$ is displayed by $\cN$ and $\cN'$. Lastly, we construct a phylogenetic tree $\cT$ on $X$ by creating a vertex $\rho$, adding a new edge that joins $\rho$ with the root of $\cT_v$, and a new edge that joins $\rho$ with the root of $\cT_c$. As $\cT_v$ and $\cT_c$ are displayed by $\cN$ and $\cN'$, it is easily checked that $\cT$ is displayed by $\cN$ and $\cN'$, and so $T(\cN)\cap T(\cN')\ne\emptyset$.

Second, suppose that $T(\cN)\cap T(\cN')\ne\emptyset$. Let $\cT$ be a phylogenetic $X$-tree that is displayed by $\cN$ and $\cN'$.  Furthermore, let $j,j'\in\{1,2,\ldots,m\}$, and let $\ell,\ell'\in\{1,2,3\}$.  For each reticulation $r_j^\ell$ in $\cN$ (resp. $\cN'$), we say that {\it $\cT$ picks $x_j^\ell$ from the clause side} of  $\cN$ (resp. $\cN'$) if $\cT$ has a vertex whose set of descendants contains $x_j^\ell$ and $C_j$ but does not contain any element in $V$; 
otherwise, we say that {\it $\cT$ picks $x_j^\ell$  from the variable side} of $\cN$ (resp. $\cN'$). Intuitively, $x_j^\ell$ is picked from the clause side of $\cN$ (resp. $\cN'$) precisely if the embedding of $\cT$ in $\cN$ (resp. $\cN'$) contains the reticulation edge directed into $r_j^\ell$ whose two end vertices are vertices of $G_j^A$ (resp. $G_j^B$). Note that, as $\cT$ is displayed by $\cN$ and $\cN'$, we have  that $\cT$ picks $x_j^\ell$  from the variable side of $\cN$ if and only if $\cT$ picks $x_j^\ell$  from the variable side of $\cN'$.
We next make two observations:
\begin{enumerate}
\item[(O1)] For each clause $C_j=(x_j^1\vee x_j^2\vee x_j^3)$, it follows from Lemma~\ref{lem:clause-gadget} that $\cT$ picks at most two of $x_j^1$, $x_j^2$, and  $x_j^3$ from the clause side of $\cN$ and $\cN'$.
\item[(O2)] It follows from Step~(\ref{step}) in the construction of $\cN$ and $\cN'$, and the fact that $\cT$ is displayed by $\cN$ and $\cN'$ that, if $\cT$ picks $x_j^\ell$ from the variable side of $\cN$ and $\cN'$, and $x_j^\ell=v_i$ for some $i\in\{1,2,\ldots,n\}$, then each $x_{j'}^{\ell'}$ with $x_{j'}^{\ell'}=\bar{v}_i$ is picked from the clause side of $\cN$ and $\cN'$. Similarly, if $\cT$ picks $x_j^\ell$ from the variable side of $\cN$ and $\cN'$, and $x_j^\ell=\bar{v}_i$ for some $i\in\{1,2,\ldots,n\}$, then each $x_{j'}^{\ell'}$ with $x_{j'}^{\ell'}=v_i$ is picked from the clause side of $\cN$ and $\cN'$.
\end{enumerate}
Now, let $\beta$ be the truth assignment that is defined as follows. For each $i\in\{1,2,\ldots,n\}$, we set $v_i=T$ if there exists an element $x_j^\ell$ with $x_j^\ell=v_i$ that is picked from the variable side of $\cN$ and $\cN'$. On the other hand, we set $v_i=F$ if either there exists an element $x_j^\ell$ with $x_j^\ell=\bar{v}_i$ that is picked from the variable side of $\cN$ and $\cN'$ or there is  no $x_j^\ell$ with $x_j^\ell\in\{v_i,\bar{v}_i\}$ that is picked from the variable side of $\cN$ and $\cN'$. Because of (O2), $\beta$ is well defined. Moreover, by (O1) it  follows that $\beta$ satisfies at least one literal of each clause and, hence, $I$ is a yes-instance. 
\end{proof}
This completes the proof of  Theorem~\ref{t:CTC}.
\end{proof}

The next corollary is an immediate consequence of Theorem~\ref{t:CTC}.

\begin{corollary}
Let $\cN$ and $\cN'$ be two temporal normal networks on $X$. It is  \emph{co-NP}-complete to decide if $T(\cN)\cap T(\cN')=\emptyset$.
\end{corollary}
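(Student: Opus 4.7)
The plan is to recognize that the problem at hand is literally the complement of \textsc{Common-Tree-Containment} restricted to temporal normal networks, so Theorem~\ref{t:CTC} should reduce the work to a routine complexity-theoretic bookkeeping. Specifically, an instance $(\cN,\cN')$ is a yes-instance of the emptiness problem exactly when it is a no-instance of \textsc{Common-Tree-Containment}.

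First I would verify membership in co-NP. A no-instance of the emptiness problem satisfies $T(\cN)\cap T(\cN')\ne\emptyset$, and the natural certificate is a phylogenetic $X$-tree $\cT$ lying in both display sets. Since $\cN$ and $\cN'$ are normal, we can invoke the polynomial-time algorithm of van Iersel et al.~\cite{iersel10} for \textsc{Tree-Containment} on normal networks---the same fact used at the start of the proof of Theorem~\ref{t:CTC} to place \textsc{Common-Tree-Containment} in NP---to check in polynomial time that $\cT\in T(\cN)$ and $\cT\in T(\cN')$. Hence the problem belongs to co-NP.

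For co-NP-hardness, I would use the trivial identity reduction from \textsc{Common-Tree-Containment} on temporal normal networks to its complement. Because the classes NP and co-NP are closed under complementation of decision problems, and Theorem~\ref{t:CTC} establishes that \textsc{Common-Tree-Containment} is NP-complete on temporal normal networks, the emptiness problem is immediately co-NP-complete on the same input class.

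I do not expect any genuine obstacle, since the corollary is a direct restatement of Theorem~\ref{t:CTC} in complementary form. The only detail worth being explicit about is that membership in co-NP relies on polynomial-time \textsc{Tree-Containment} being available specifically for normal networks; if one dropped normality the certificate-verification step would no longer be guaranteed to run in polynomial time, so the hypothesis that both inputs are normal is used in an essential way.
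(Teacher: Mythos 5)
Your proposal takes exactly the paper's route: the paper offers no separate argument, stating only that the corollary ``is an immediate consequence of Theorem~\ref{t:CTC},'' and your fleshed-out version (non-emptiness certified by a common tree, verified in polynomial time via the van Iersel et al.\ algorithm for normal networks, plus hardness by complementing the NP-complete problem) is precisely the bookkeeping the authors leave implicit.

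One justification you give is misstated, though the conclusion survives. You write that ``the classes NP and co-NP are closed under complementation of decision problems''; that claim is false as written---indeed, closure of NP under complementation is equivalent to NP $=$ co-NP, a major open problem. The fact you actually need is different and elementary: co-NP consists, by definition, of the complements of NP languages, and a polynomial-time many-one reduction from $A$ to $B$ is simultaneously a reduction from the complement of $A$ to the complement of $B$; hence the complement of an NP-complete problem is co-NP-complete. With that substitution your argument is complete and coincides with the paper's intent, including the correct observation that membership in co-NP leans on polynomial-time \textsc{Tree-Containment} for normal networks, the same ingredient used to place \textsc{Common-Tree-Containment} in NP in the proof of Theorem~\ref{t:CTC}.
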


\section{Hardness of {\sc Display-Set-Equivalence}}\label{sec:DSE}
In this section, we show that {\sc Display-Set-Equivalence} is $\Pi_2^P$-complete, that is, the problem is complete for the second level of the polynomial hierarchy. To establish this result, we use a chain of three polynomial-time reductions that are described in Subsections~\ref{sec:disjoint-paths},~\ref{sec:display-set-containment}, and~\ref{sec:display-set-equivalence}. Before detailing the reductions, we introduce two more decision problems that play an important role in this section.

Recall the (ordinary) {\sc 3-SAT} problem as introduced in Section~\ref{sec:CTC}. The input to an instance of  {\sc 3-SAT} consists of a boolean formula over a set of variables. Importantly, each variable is existentially quantified since we are asking whether or not there {\it exists} a truth assignment to each variable that satisfies each clause of the formula. In contrast, the following quantified version of  {\sc 3-SAT} has two different types of variables, i.e each variable is either existentially or universally quantified.

\noindent {\sc $\forall \exists$ 3-SAT}\\
\noindent{\bf Input.} A quantified boolean formula \[\Psi=\forall v_1 \forall v_2 \cdots \forall v_p \exists v_{p+1} \exists v_{p+2} \cdots \exists v_n \bigwedge_{j=1}^m C_j\] over a set of variables $V=\{v_1,v_2,\ldots,v_n\}$ such that each clause $C_j$ is a disjunction of exactly three  literals and each literal is an element in $\{v_i,\bar{v}_i:i\in\{1,2,\ldots,n\}\}$.\\
\noindent{\bf Question.} For each truth assignment $\beta^\forall:\{v_1, v_2, \ldots, v_p\}\rightarrow \{F,T\}$, does there exist a truth assignment $\beta^\exists:\{v_{p+1}, v_{p+2}, \ldots, v_p\}\rightarrow \{F,T\}$ such that, collectively, $\beta^\forall$ and $\beta^\exists$ satisfy each clause in $\Psi$?

\noindent It was shown in~\cite{stockmeyer76} that {\sc $\forall \exists$ 3-SAT} is $\Pi_2^P$-complete. Let $I$ be an instance of  {\sc $\forall \exists$ 3-SAT}. Note that each clause of $I$ has at least one literal that is an element in $\{x_i,\bar{x}_i:i\in\{p+1,p+2,\ldots,n\}\}$ since, otherwise, $I$ is a no-instance. Furthermore, if all variables are existentially quantified, then $I$ is an instance of the (ordinary) {\sc 3-SAT} problem.
Hence, we may assume throughout this section that $1\leq p<n$.

We next formally state a quantified version of the well-known NP-complete decision problem {\sc Directed-Disjoint-Connecting-Paths}~\cite{garey79,perl78}. Let $G$ be a directed graph with vertex set $V$, and let $\{(s_1,t_1),(s_2,t_2),\ldots,(s_k,t_k)\}$ be a collection of pairs of vertices in $V$. In what follows, we write $\pi_i$ to denote a directed path in $G$ from $s_i$ to $t_i$ with $i\in\{1,2,\ldots,k\}$. 

\noindent{\sc $\forall \exists$ Directed-Disjoint-Connecting-Paths} \\
{\bf Input.} A directed graph $G$ and two collections 
\begin{align*}
P^\forall &= \{(s_1,t_1),(s_2,t_2),\ldots,(s_p,t_p)\}, \\
P^\exists &= \{(s_{p+1},t_{p+1}),(s_{p+2},t_{p+2}),\ldots,(s_k,t_k)\}
\end{align*}
of pairs of vertices in $G$ such that $1\leq p< k$ and, for each $(s_i,t_i)\in P^\forall$, there exists a directed path from $s_i$ to $t_i$ in $G$.\\
{\bf Question.} For each set $\Pi^\forall=\{\pi_1,\pi_2,\ldots,\pi_p\}$ of directed paths, does there exist a set $\Pi^\forall\cup\{\pi_{p+1},\pi_{p+2},\ldots,\pi_k\}$ of mutually vertex-disjoint directed paths in $G$? 

\subsection{{{\sc $\forall \exists$ Directed-Disjoint-Connecting-Paths} is $\Pi_2^P$-complete}}\label{sec:disjoint-paths}

To show that {\sc $\forall \exists$ Directed-Disjoint-Connecting-Paths} is complete for the second level of the polynomial hierarchy, we use a polynomial-time reduction from {\sc $\forall \exists$ 3-SAT}. This reduction constructs a special instance  of {\sc $\forall \exists$ Directed-Disjoint-Connecting-Paths} for which the input graph is a particular type of phylogenetic network. 

Let $\cN$ be a phylogenetic network on $X$, let $S=\{s_1,s_2,\ldots,s_k\}$ and $T=\{t_1,t_2,\ldots,t_k\}$  be two disjoint subsets of the vertices of $\cN$ such that $T=X$, and let $p\in\{1,2,\ldots,k\}$. We call $\cN$ a {\it caterpillar-inducing} network with respect to $S$ if the network obtained from $\cN$ by deleting each vertex that lies on a directed path from a child of a vertex in $S$ to a leaf of $\cN$ is a caterpillar up to deleting all leaf labels. Moreover, we say that $\cN$ has the {\it two-path property relative to}  $p$ if, for each $i\in\{1,2,\ldots,p\}$, there are two directed paths, say $\pi_i$ and $\pi_i'$, from $s_i$ to $t_i$ such that the following three properties are satisfied:
\begin{enumerate}[(i)]
\item $\pi_i$ and $\pi_i'$ are the only directed paths from $s_i$ to $t_i$ in $\cN$,
\item $\pi_i$ and $\pi_i'$ only have the three vertices $s_i$, $t_i$, and the (unique) parent of $t_i$ as well as the edge directed into $t_i$ in common, and
\item no path in $\{\pi_i,\pi_i':i\in\{1,2,\ldots,p\}\}$ intersects with any path in $\{\pi_j,\pi_j':j\in\{1,2,\ldots,p\}-\{i\}\}$.
\end{enumerate}
Using the same notation as in the statement of {\sc $\forall \exists$ Directed-Disjoint-Connecting-Paths}, we now introduce a similar problem whose input graph is a phylogenetic network.

\noindent{\sc $\forall \exists$ Phylo-Directed-Disjoint-Connecting-Paths} \\
{\bf Input.} A phylogenetic network $\cN$ on $X$, two  sets $S=\{s_1,s_2,\ldots,s_k\}$ and $T=X=\{t_1,t_2,\ldots,t_k\}$ of vertices of $\cN$, and an integer $p$ with $1\leq p < k$ such that $\cN$ is caterpillar-inducing with respect to $S$ and has the  two-path property relative to $p$. Furthermore, the two collections 
\begin{align*}
P^\forall &= \{(s_1,t_1),(s_2,t_2),\ldots,(s_p,t_p)\}, \\
P^\exists &= \{(s_{p+1},t_{p+1}),(s_{p+2},t_{p+2}),\ldots,(s_k,t_k)\}
\end{align*}
of pairs of elements in $S$ and $T$.\\
{\bf Question.} For each set $\Pi^\forall=\{\pi_1,\pi_2,\ldots,\pi_p\}$ of directed paths, does there exist a set $\Pi^\forall\cup\{\pi_{p+1},\pi_{p+2},\ldots,\pi_k\}$ of mutually vertex-disjoint directed paths in $\cN$?

The next theorem establishes the $\Pi_2^P$-completeness of {\sc $\forall \exists$ Phylo-Direc\-ted-Disjoint-Con\-necting-Paths}. The reduction that we use for the proof has a flavor that is similar to that in \cite[page 86]{khuller94}.

\begin{theorem}\label{t:disjoint-paths}
The decision problem {\sc $\forall \exists$ Phylo-Directed-Disjoint-Con\-necting-Paths} is $\Pi_2^P$-complete. 
\end{theorem}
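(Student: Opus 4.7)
The plan is to prove both membership in $\Pi_2^P$ and $\Pi_2^P$-hardness, using a reduction from {\sc $\forall\exists$ 3-SAT} for the latter. For membership, I will observe that a no-instance is certified by a single choice of $\Pi^\forall=\{\pi_1,\ldots,\pi_p\}$ of $p$ directed paths (each of length polynomial in $|V(\cN)|$) such that no extension $\{\pi_{p+1},\ldots,\pi_k\}$ makes the union mutually vertex-disjoint. Given $\Pi^\forall$, the existential question becomes a standard instance of {\sc Directed-Disjoint-Connecting-Paths}, which lies in NP. Consequently the decision problem has the $\forall\Pi^\forall\exists\Pi^\exists$ form with a polynomial-time verifier, hence lies in $\Pi_2^P$.

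For hardness, I would adapt the classical reduction from {\sc 3-SAT} to {\sc Directed-Disjoint-Connecting-Paths} whose flavour is found in~\cite{khuller94}. Given an instance $\Psi$ of {\sc $\forall\exists$ 3-SAT}, I build a phylogenetic network $\cN$ whose backbone is a directed caterpillar onto which $n$ variable gadgets and $m$ clause gadgets are attached in a fixed order. Each variable gadget for $v_i$ has a designated source $s_i$ and a leaf $t_i$ together with precisely two internally vertex-disjoint $s_i$-to-$t_i$ paths $\pi_i^T$ and $\pi_i^F$ that meet only at $s_i$, $t_i$, and the parent of $t_i$; the $T$-path represents $v_i=T$ and the $F$-path represents $v_i=F$. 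Each clause gadget for $C_j$ has a source $s_{n+j}$ and a leaf $t_{n+j}$ with three candidate $s_{n+j}$-to-$t_{n+j}$ paths $\rho_j^1,\rho_j^2,\rho_j^3$, one per literal. For each literal $x_j^\ell$, the path $\rho_j^\ell$ shares an interior vertex with the variable path of the \emph{opposite} polarity: with $\pi_i^F$ if $x_j^\ell=v_i$, and with $\pi_i^T$ if $x_j^\ell=\bar v_i$. Finally I set $P^\forall=\{(s_i,t_i):1\le i\le p\}$ and $P^\exists=\{(s_i,t_i):p<i\le n\}\cup\{(s_{n+j},t_{n+j}):1\le j\le m\}$.

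Correctness will follow from the standard disjoint-paths-to-SAT dictionary: choosing $\Pi^\forall$ amounts to selecting $\pi_i\in\{\pi_i^T,\pi_i^F\}$ for each universal $v_i$, equivalently a truth assignment to $v_1,\ldots,v_p$. A vertex-disjoint completion then exists iff there is an assignment of $v_{p+1},\ldots,v_n$ (encoded by $\pi_{p+1},\ldots,\pi_n$) together with, for each clause $C_j$, a choice of literal whose path $\rho_j^\ell$ is unblocked by the variable paths. The sharing convention forces $\rho_j^\ell$ to be available precisely when $x_j^\ell$ evaluates to true under the chosen assignment, so the completion exists iff the assignment satisfies $\Psi$.

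The main obstacle, and the step demanding the most care, will be engineering the gadgets so that $\cN$ is simultaneously a legitimate binary phylogenetic network, is caterpillar-inducing with respect to $S$, and enjoys the two-path property relative to $p$. The caterpillar-inducing property will be immediate from the backbone design, and the pairwise disjointness of universal variable gadgets across different indices will follow from attaching them to disjoint portions of the backbone. The two-path property for each universal $i\le p$ is delicate, because each shared vertex between $\pi_i^T$ or $\pi_i^F$ and a clause-path threatens to create an unwanted directed route from $s_i$ to $t_i$. I will circumvent this by realising each shared vertex as a tree-vertex branch point whose non-$\pi_i$ out-edge leads strictly downstream to the clause leaf $t_{n+j}$ (and never loops back to $t_i$), and by inserting auxiliary reticulations or subdivision vertices where necessary to preserve the binary in-/out-degree constraints and to keep $\cN$ acyclic with a single root. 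Since the number of gadgets and shared vertices is $O(n+m)$, the resulting network has size polynomial in $|\Psi|$, completing the reduction.
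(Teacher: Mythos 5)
Your proposal is correct and follows essentially the same route as the paper: the identical guess-and-NP-oracle argument for membership in $\Pi_2^P$ (using the two-path property to get disjointness of $\Pi^\forall$ for free), and the identical reduction from {\sc $\forall\exists$ 3-SAT}, with a caterpillar backbone, variable gadgets carrying two internally disjoint $s_i$-to-$t_i$ paths, and each clause path threaded through the variable path that is selected precisely when its literal is false. The one engineering detail you leave implicit---that in a binary network a ``shared interior vertex'' must in fact be realized as a shared \emph{edge} between an entry vertex of in-degree two (a reticulation) and an exit vertex of out-degree two (a tree vertex)---is exactly what the paper's $p_l^{\text{in}}/p_l^{\text{out}}$ and $n_k^{\text{in}}/n_k^{\text{out}}$ pairs accomplish, and your remark about inserting auxiliary reticulations to meet the binary degree constraints anticipates precisely this construction.
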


\begin{proof}
We first show that {\sc $\forall \exists$ Phylo-Directed-Disjoint-Connecting-Paths} is in $\Pi_2^P$. Using the same notation as in the formal statement of this problem, guess a set $\Pi^\forall=\{\pi_1,\pi_2,\ldots,\pi_p\}$  of directed paths in $\cN$. Since $\cN$ has the two-path property relative to $p$, the paths in $\Pi^\forall$ are mutually vertex disjoint. Next obtain the directed graph $G$ from $\cN$ by deleting all vertices that lie on a path in $\Pi^\forall$. Lastly, use an NP-oracle for the unquantified version of {\sc Directed-Disjoint-Connecting-Paths}  to decide if there exists a set $\Pi^\exists=\{\pi_{p+1},\pi_{p+2},\ldots,\pi_k\}$ of mutually vertex-disjoint directed paths in $G$. Since a given instance of {\sc $\forall \exists$ Phylo-Directed-Disjoint-Connecting-Paths} is a no-instance precisely if there exists some set $\Pi^\forall$ for which no choice of $\Pi^\exists$ results in a set $\Pi^\forall\cup\Pi^\exists$ of mutually vertex-disjoint directed paths in $\cN$, it follows that this problem is in co-NP$^{\text{NP}}=\Pi_2^P$. 

We now establish a polynomial-time reduction from the quantified {\sc 3-SAT} problem. Let $I$ be an instance of {\sc $\forall \exists$ 3-SAT} with boolean formula \[\Psi=\forall v_1 \forall v_2 \cdots \forall v_p \exists v_{p+1} \exists v_{p+2} \cdots \exists v_n \bigwedge_{j=1}^m C_j\] over a set $V=\{v_1,v_2,\ldots,v_n\}$ of variables. Throughout the proof, we use $C_j=(x_{3j-2}\vee x_{3j-1}\vee x_{3j})$ to refer to the three literals in $C_j$ for each $j\in\{1,2,\ldots,m\}$. 
Now, for each $i\in\{1,2,\ldots,n\}$, let $\mathcal{J}_i^+$ be the set that consists of the indices of the literals that are equal to $v_i$  and, similarly, let $\mathcal{J}_i^-$ be the set that consists of the indices of the literals that are equal to  $\bar{v}_i$. Without loss of generality, we may assume that $\cJ_i^+\ne\emptyset$ or $\cJ_i^-\ne\emptyset$ since, otherwise, $v_i$ can be deleted from $V$. 

For each variable $v_i$, we construct a variable gadget $G_i^v$ as follows: 
\begin{enumerate}
\item Create three vertices  $s_i^v$, $t_i^v$, and $y_i$. 
\item Create the (possibly empty) set of vertices $\bigcup_{l \in \mathcal{J}_i^+} \{p_l^{\text{in}}, p_l^{\text{out}}\}$ and construct the directed path 
\[
\pi^+_i = (s_i^v,\, p_{l_1}^{\text{in}},\, p_{l_1}^{\text{out}},\,p_{l_2}^{\text{in}},\, p_{l_2}^{\text{out}},\, \ldots,\,  p_{l_q}^{\text{in}},\, p_{l_q}^{\text{out}},\, y_i,t_i^v) 
\]
with $\{l_1, l_2, \ldots, l_q\} = \mathcal{J}_i^+$.
\item Create the (possibly empty) set of vertices $\bigcup_{k \in \mathcal{J}_i^-} \{n_k^{\text{in}}, n_k^{\text{out}}\}$ and construct the directed path 
\[
\pi^-_i = (s_i^v,\, n_{k_1}^{\text{in}},\, n_{k_1}^{\text{out}},\, n_{k_2}^{\text{in}},\, n_{k_2}^{\text{out}}, \ldots,\,  n_{k_r}^{ \text{in}},\, n_{k_r}^{\text{out}},\, y_i,t_i^v)
\]
with $\{k_1, k_2,\ldots, k_r\} = \mathcal{J}_i^-$.
\end{enumerate}

\begin{figure}
\centering
\includegraphics[width=.8\textwidth]{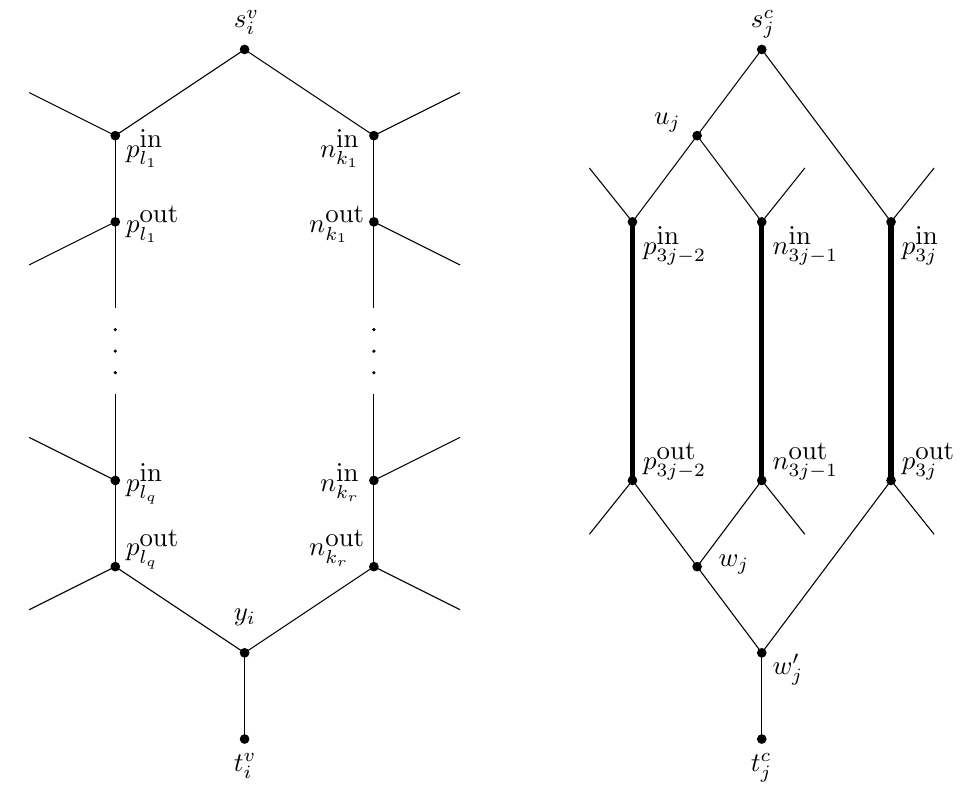}
\caption{Left: Variable gadget for a variable $v_i$. Right: Clause gadget for a clause $C_j=(x_{3j-2}\vee x_{3j-1}\vee x_{3j})$, where the second literal equals a negated variable and each of the other two literals equals an unnegated variable. The three thick edges are edges of a variable gadget. The complete construction is detailed in the proof of Theorem~\ref{t:disjoint-paths}.}
\label{fig:disjointPaths}
\end{figure}

\noindent Note that, since we do not allow for parallel edges, the last edge $(y_i,t_i^v)$ of $\pi^+_i $ and $\pi^-_i $ only appears once in $G_i^v$. Intuitively, the two paths $\pi^+_i$ and $\pi^-_i$ correspond to the two possible truth assignments for the variable~$v_i$.  To illustrate, a  generic variable gadget for $v_i$ is shown on the left-hand side of Figure \ref{fig:disjointPaths}. The additional edges in this figure that are directed into vertices of the variable gadget and directed out of vertices of this gadget will be defined as part of the clause gadget construction which we describe next.

For a clause $C_j=(x_{3j-2}\vee x_{3j-1}\vee x_{3j})$, let $i_j$, $i_j'$, and $i_j''$ be the elements in $\{1,2,\ldots,n\}$ such that $x_{3j-2}\in\{v_{i_j},\bar{v}_{i_j}\}$, $x_{3j-1}\in\{v_{i_j'},\bar{v}_{i_j'}\}$, and $x_{3j}\in\{v_{i_j''},\bar{v}_{i_j''}\}$. Now, for each $j\in\{1,2,\ldots,m\}$, add the following vertices and edges to the variable gadgets.
\begin{enumerate}
\item  Create the vertices $\{s_j^c,t_j^c, u_j, w_j, w'_j\}$.
\item Add the edges in $\{(s_j^c, u_j)$, $(w_j, w'_j)$, $ (w'_j, t_j^c)\}$.
\item If $x_{3j-2} = v_{i_j}$, add the edges $(u_j, p_{3j-2}^{\text{in}})$ and $(p_{3j-2}^{\text{out}}, w_j)$. Otherwise, add the edges $(u_j, n_{3j-2}^{\text{in}})$ and $(n_{3j-2}^{\text{out}}, w_j)$.
\item If $x_{3j-1}= v_{i_j'}$, add the edges $(u_j, p_{3j-1}^{\text{in}})$ and $(p_{3j-1}^{\text{out}}, w_j)$. Otherwise, add the edges $(u_j, n_{3j-1}^{\text{in}})$ and $(n_{3j-1}^{\text{out}}, w_j)$.
\item If $x_{3j}= v_{i_j''}$, add the edges $(s_j^c, p_{3j}^{\text{in}})$ and $(p_{3j}^{\text{out}}, w'_j)$. Otherwise, add the edges $(s_j^c, n_{3j}^{\text{in}})$ and $(n_{3j}^{\text{out}}, w'_j)$.
\end{enumerate}
In what follows, we refer to the edges and vertices that get added in the aforementioned 5-step construction relative to a given $C_j$ as the clause gadget for $C_j$. For each clause $C_j=(x_{3j-2}\vee x_{3j-1}\vee x_{3j})$, there are three directed paths from $s_j^c$ to $t_j^c$ each of which corresponds to one of the three literals in $C_j$. For example, for the first literal $x_{3j-2}$, there is a directed path from $s_j^c$ to $t_j^c$ that intersects with the edge $(p_{3j-2}^{\text{in}},p_{3j-2}^{\text{out}})$ on $\pi_{i_j}^+$ if $x_{3j-2}=v_{i_j}$ and that intersects with the edge $(n_{3j-2}^{\text{in}},n_{3j-2}^{\text{out}})$ on $\pi_{i_j}^-$  if $x_{3j-2}=\bar{v}_{i_j}$. To illustrate, assume that $x_{3j-2}=v_{i_j}$, $x_{3j-1}=\bar{v}_{i_j'}$, and $x_{3j}=v_{i_j''}$. For this specific case, the clause gadget for $C_j$ is shown on the right-hand side of Figure \ref{fig:disjointPaths}.  

Now, let $G$ be the directed graph that results from the construction of all variable and all clause gadgets. Observe that $G$ is acyclic. We next set up an instance $I'$ of {\sc $\forall \exists$ Phylo-Directed-Dis\-joint-Connecting-Paths}. Let $\cT$ be the caterpillar $(\ell_1^v,\ell_2^v,\ldots,\ell_n^v,\ell_{1}^c,\ell_{2}^c,\ldots,\ell_{m}^c)$. We  obtain a directed acyclic graph  $\cN$ from $\cT$ and $G$ by identifying $\ell_i^v$ with $s_i^v$ for each  $i\in\{1,2,\ldots,n\}$ and identifying $\ell_j^c$ with $s_j^c$ for each $j\in\{1,2,\ldots,m\}$. Clearly, $\cN$ is connected and has no parallel edges. Moreover, except for the root, since each vertex of $G$ has in-degree one and out-degree two, in-degree two and out-degree one, or in-degree one and out-degree zero, it follows that $\cN$ is a phylogenetic network on $T=\{t_1^v,t_2^v,\ldots,t_n^v,t_1^c,t_2^c,\ldots,t_m^c\}$. 
Let $S=\{s_1^v,s_2^v,\ldots,s_n^v,s_1^c,s_2^c,\ldots,s_m^c\}$. 
Since every vertex of $G$ that is not contained in $S$ lies on a directed path from a child of a vertex in $S$ to a leaf in $\cN$, it follows that $\cN$ is caterpillar-inducing with respect to $S$. Moreover, for each $i\in\{1,2,\ldots,n\}$, there are exactly two directed paths from $s_i^v$ to $t_i^v$ in $G_i^v$ and, hence, in $\cN$ that only intersect in the vertices $s_i^v$, $t_i^v$, and $y_i$, and the edge $(y_i,t_i^v)$. Recalling that $1\leq p<n$, it  follows from the construction that $\cN$ has the two-path property relative to $p$, and that both $P^\forall$ and $P^\exists$ are non-empty. We now set 
\begin{align*}
P^\forall &= \{(s_1^v,t_1^v),(s_2^v,t_2^v),\ldots,(s_p^v,t_p^v)\} \text{ and }\\  
P^\exists &= \{(s_{p+1}^v,t_{p+1}^v),(s_{p+2}^v,t_{p+2}^v),\ldots,(s_{n}^v,t_{n}^v)\} \cup \{(s_1^c, t_1^c),(s_2^c, t_2^c),\ldots, (s_m^c, t_m^c)\}.
\end{align*}
This completes the description of $I'$.

Since the number of vertices of $G$ is $3n+11m$, the number of vertices of $\cT$ is $2(n+m)-1$, and $G$ and $\cT$ have $n+m$ vertices in common, it follows that $\cN$ has size  $O(n+m)$ and can be constructed in polynomial time.

We complete the proof by establishing the following sublemma.

\begin{sublemma}\label{slem:iffPaths}
The instance $I$ is a yes-instance if and only if the instance $I'$ is a yes-instance.
\end{sublemma}

\begin{proof}
First, suppose that $I$ is a yes-instance. Let $\Pi^\forall=\{\pi_1^v,\pi_2^v,\ldots,\pi_p^v\}$ be a set of directed paths in $\cN$ such that each $\pi_i^v$ begins at $s_i^v$ and ends at $t_i^v$. As $p<n$, we have $\pi_i^v\in\{\pi_i^+,\pi_i^-\}$. Moreover, since $\cN$ has the two-path property relative to $p$, the paths in $\Pi^\forall$ are mutually vertex disjoint in $\cN$. Now, let $\beta: V\rightarrow\{F,T\}$ be a truth assignment that satisfies each clause of $\Psi$ such that, if $\pi_i^v=\pi_i^+$, then $v_i=F$ and, otherwise, $v_i=T$ for each $i\in\{1,2,\ldots,p\}$. Since $I$ is a yes-instance, $\beta$ exists. We next construct a directed path for each pair of vertices in $P^\exists$ such that, collectively, these paths together with the elements in $\Pi^\forall$ form a solution to $I'$. For each $i\in\{p+1,p+2,\ldots,n\}$, set $\pi_i^v=\pi_i^+$ if $v_i=F$ and set $\pi_i^v=\pi_i^-$ if $v_i=T$. Furthermore, for each $j\in\{1,2,\ldots,m\}$, let $x_{j'}$, with $j'\in\{3j-2,3j-1,3j\}$, be a literal in $C_j$ that is satisfied by $\beta$, and let $i$ be the element in $\{1,2,\ldots,n\}$ such that $x_{j'}\in\{v_i,\bar{v}_i\}$. By construction of the clause gadget, there is a directed path, say $\pi_j^c$, from $s_j^c$ to $t_j^c$ in $\cN$ such that one of the following properties applies.
\begin{enumerate}[(i)]
\item If $x_{j'}=v_{i}$, then $\pi_j^c$ contains the edge $(p_{j'}^\text{in},p_{j'}^\text{out})$.
\item If $x_{j'}=\bar{v}_{i}$, then $\pi_j^c$ contains the edge $(n_{j'}^\text{in},n_{j'}^\text{out})$.
\end{enumerate}
In Case (i), as $v_i=T$, we have $\pi_i^v=\pi_i^-$, and it follows that $\pi_j^c$ does not intersect $\pi_i^v$. Similar in Case (ii), as $v_i=F$, we have $\pi_i^v=\pi_i^+$, and it again follows that $\pi_j^c$ does not intersect $\pi_i^v$. By construction of $\cN$, it is now straightforward to check that \[\Pi^\forall\cup\{\pi_{p+1}^v,\pi_{p+2}^v,\ldots,\pi_{n}^v,\pi_1^c,\pi_2^c,\ldots,\pi_m^c\}\] is a collection of mutually vertex-disjoint directed-paths in $\cN$ that connect each pair of vertices in $P^\forall\cup P^\exists$. In particular, since the argument presented in this paragraph applies to all choices of directed paths in $\Pi^\forall$, we conclude that $I'$ is a yes-instance.

Second, suppose that $I'$ is a yes-instance. Let $\beta^\forall: \{v_1,v_2,\ldots,v_p\}\rightarrow\{F,T\}$ be a truth assignment. Furthermore, let \[\Pi=\{\pi_1^v,\pi_2^v,\ldots,\pi_p^v\}\cup\{\pi_{p+1}^v,\pi_{p+2}^v,\ldots,\pi_{n}^v,\pi_1^c,\pi_2^c,\ldots,\pi_m^c\}\] be a collection of mutually vertex-disjoint directed paths in $\cN$ such that $\pi_i^v=\pi_i^-$ if $v_i=T$ and $\pi_i^v=\pi_i^+$ if $v_i=F$ for each $i\in\{1,2,\ldots,p\}$. 
Since $I'$ is a yes-instance, $\Pi$ exists. Now, let $\beta: V \rightarrow\{F,T\}$  such that
\begin{enumerate}[(i)]
\item for each $i\in\{1,2,\ldots,p\}$, we have $\beta(v_i)=\beta^\forall(v_i)$ and, 
\item for each $i\in\{p+1,p+2,\ldots,n\}$, we have $\beta(v_i)=F$ if $\pi_i^v=\pi_i^+$ and, $\beta(v_i)=T$ if $\pi_i^v=\pi_i^-$. 
\end{enumerate}
We next show that $\beta$ satisfies each clause of $\Psi$. Let $C_j=(x_{3j-2}\vee x_{3j-1}\vee x_{3j})$ be a clause of $\Psi$ with $j\in\{1,2,\ldots,m\}$. Consider the directed path $\pi_j^c\in\Pi$ from $s_j^c$ to $t_j^c$ in $\cN$. Let $j'$ be the unique element in $\{3j-2,3j-1,3j\}$ such that $\pi_j^c$ contains either the edge $(p_{j'}^\text{in},p_{j'}^\text{out})$ or the edge $(n_{j'}^\text{in},n_{j'}^\text{out})$, and let $i$ be the element in $\{1,2,\ldots,n\}$ such that $x_{j'}\in\{v_i,\bar{v}_i\}$.
First, assume that $\pi_j^c$ contains $(p_{j'}^\text{in},p_{j'}^\text{out})$. Then, as $x_{j'}=v_i$ and the paths in $\Pi$ are mutually vertex disjoint in $\cN$, it follows that $\pi_i^v=\pi_i^-$. Hence $\beta(v_i)=T$. Second, assume that $\pi_j^c$ contains $(n_{j'}^\text{in},n_{j'}^\text{out})$. Then, as $x_{j'}=\bar{v}_i$ and the paths in $\Pi$ are mutually vertex disjoint, it follows that $\pi_i^v=\pi_i^+$. Hence $\beta(v_i)=F$. Under both assumptions, $\beta$ satisfies $C_j$ because $\beta(x_{j'})=T$. It now follows that $\beta$ satisfies $\Psi$ and, as the argument applies to all choices of truth assignments for the elements in $\{v_1,v_2,\ldots,v_p\}$, we conclude that $I$ is a yes-instance.  
\end{proof}
This completes the proof of Theorem~\ref{t:disjoint-paths}.
\end{proof}

While the next corollary is not needed for the remainder of the paper, it may be of independent interest in the theoretical computer science community.

\begin{corollary}
The decision problem {\sc $\forall \exists$ Directed-Disjoint-Connect\-ing-Paths} is $\Pi_2^P$-complete. 
\end{corollary}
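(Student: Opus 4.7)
The plan is to derive both membership and hardness almost for free from Theorem~\ref{t:disjoint-paths}, since $\forall\exists$ {\sc Phylo-Directed-Disjoint-Connecting-Paths} is essentially a restriction of $\forall\exists$ {\sc Directed-Disjoint-Connecting-Paths} to input graphs that happen to be phylogenetic networks with some additional structural properties.

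For membership in $\Pi_2^P$, I would repeat the argument given in the first paragraph of the proof of Theorem~\ref{t:disjoint-paths}, but without using the two-path property. Given an instance $(G, P^\forall, P^\exists)$, a no-certificate consists of a choice $\Pi^\forall = \{\pi_1, \pi_2, \ldots, \pi_p\}$ of mutually vertex-disjoint directed paths connecting the pairs in $P^\forall$; such a certificate has polynomial size because each $\pi_i$ uses at most $|V(G)|$ vertices, and existence of at least one candidate $\Pi^\forall$ is guaranteed by the input condition that each $(s_i,t_i)\in P^\forall$ is connected in $G$ (mutually vertex-disjoint $\Pi^\forall$ need not exist in general, but that already witnesses a no-instance). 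Deleting the vertices on the paths of $\Pi^\forall$ from $G$ yields a directed graph $G'$, and we then invoke an NP-oracle for the unquantified {\sc Directed-Disjoint-Connecting-Paths} on $(G', P^\exists)$ to decide whether a suitable $\Pi^\exists$ exists. This places the problem in $\mathrm{co\text{-}NP}^{\mathrm{NP}} = \Pi_2^P$.

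For hardness, I would observe that the identity map is a polynomial-time reduction from $\forall\exists$ {\sc Phylo-Directed-Disjoint-Connecting-Paths} to $\forall\exists$ {\sc Directed-Disjoint-Connecting-Paths}. Indeed, every instance $(\cN, P^\forall, P^\exists, p)$ of the Phylo variant is in particular an instance of the general variant: $\cN$ is a directed graph, $1\leq p<k$ holds by assumption, and the two-path property relative to $p$ ensures that for each $(s_i,t_i)\in P^\forall$ there is a directed path from $s_i$ to $t_i$ in $\cN$, which is precisely the precondition required by $\forall\exists$ {\sc Directed-Disjoint-Connecting-Paths}. The yes/no answer is defined by the same question in both problems, so yes-instances map to yes-instances and no-instances to no-instances. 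Combining this trivial reduction with Theorem~\ref{t:disjoint-paths} yields $\Pi_2^P$-hardness, and together with membership this gives $\Pi_2^P$-completeness.

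The only step that requires any care is verifying that the input conditions of the general problem are all met by a Phylo instance; there is no genuine obstacle here, just a short bookkeeping check.
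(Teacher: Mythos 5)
Your overall route is the paper's: hardness via the observation that every instance of {\sc $\forall\exists$ Phylo-Directed-Disjoint-Connecting-Paths} is literally an instance of {\sc $\forall\exists$ Directed-Disjoint-Connecting-Paths} (the identity reduction), and membership via the guess-plus-NP-oracle argument from the first paragraph of the proof of Theorem~\ref{t:disjoint-paths}. Your bookkeeping for the reduction is correct, including the point that the two-path property supplies the path-existence precondition on the pairs in $P^\forall$.

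The one place you diverge from the paper is the membership argument, and there is a small but genuine gap there. The question, as stated, quantifies over \emph{all} sets $\Pi^\forall$ of directed paths connecting the pairs in $P^\forall$, not only over mutually vertex-disjoint ones; if the universal player chooses two intersecting paths $\pi_i$ and $\pi_j$, then no extension $\Pi^\forall\cup\Pi^\exists$ can be mutually vertex-disjoint, so that choice alone makes the instance a no-instance. Consequently an instance can be a no-instance even though every mutually vertex-disjoint $\Pi^\forall$ extends successfully (take two pairs in $P^\forall$ whose paths can optionally route through a common vertex while fully disjoint routes also exist). Your certificate scheme, which admits only disjoint $\Pi^\forall$ and treats non-existence of a disjoint $\Pi^\forall$ as the only degenerate case, has no certificate for such instances, so your verifier would misclassify them as yes-instances. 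The paper avoids this by having the verifier, given an arbitrary guessed $\Pi^\forall$, additionally check in polynomial time whether the paths in $\Pi^\forall$ are vertex disjoint: a non-disjoint guess is then itself an immediately verifiable no-witness, while a disjoint guess is handled by the NP-oracle exactly as in your sketch. The fix is one sentence---allow a non-disjoint $\Pi^\forall$ as a no-certificate---but note that this check is precisely what distinguishes the general problem from the Phylo variant, where the two-path property makes every $\Pi^\forall$ automatically disjoint and the check is unnecessary.
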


\begin{proof}
Since every instance of {\sc $\forall \exists$ Phylo-Directed-Disjoint-Connect\-ing-Paths}  is also an instance of {\sc $\forall \exists$ Directed-Disjoint-Connecting-Paths}, it follows from Theorem~\ref{t:disjoint-paths} that the latter problem is $\Pi_2^P$-hard. To establish that {\sc $\forall \exists$ Directed-Disjoint-Connecting-Paths} is in $\Pi_2^P$, we use the same argument as in the first paragraph of the proof of Theorem~\ref{t:disjoint-paths} and, additionally, check in polynomial time if the paths in $\Pi^\forall$ are vertex disjoint.
\end{proof}

\subsection{{\sc Display-Set-Containment} is $\Pi_2^P$-complete}\label{sec:display-set-containment}
In this section, we show that {\sc Display-Set-Containment} is complete for the second level of the polynomial hierarchy. This problem is a generalization of the well-known NP-complete {\sc Tree-Containment} problem~\cite{kanj08}. 

\begin{theorem}\label{t:display-containment}
{\sc Display-Set-Containment} is $\Pi_2^P$-complete. 
\end{theorem}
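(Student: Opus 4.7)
The plan is to establish the theorem via the two standard steps for $\Pi_2^P$-completeness: a membership argument and a polynomial-time reduction from a known $\Pi_2^P$-complete problem.

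For membership in $\Pi_2^P$, I would argue as follows. An instance $(\cN,\cN')$ is a no-instance of {\sc Display-Set-Containment} precisely when some phylogenetic $X$-tree $\cT$ is displayed by $\cN$ but not by $\cN'$. By the observation in Section~\ref{sec:prelim} characterizing displaying via switchings, any such $\cT$ admits a polynomial-size certificate: a switching $S$ of $\cN$ that yields $\cT$. Given $S$, one computes $\cT$ from $\cN$ in polynomial time, and then invokes a single call to an NP-oracle for ordinary {\sc Tree-Containment} on the pair $(\cT,\cN')$ to confirm that $\cT\notin T(\cN')$. This places the problem in co-NP$^{\text{NP}} = \Pi_2^P$.

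For hardness, I would reduce in polynomial time from {\sc $\forall\exists$ Phylo-Directed-Disjoint-Connecting-Paths}, which is $\Pi_2^P$-complete by Theorem~\ref{t:disjoint-paths}. Given an instance $(\cN,S,T,p,P^\forall,P^\exists)$, the goal is to produce two phylogenetic networks $\cN_1$ and $\cN_2$ on a common leaf set so that $T(\cN_1)\subseteq T(\cN_2)$ holds if and only if every universal path family $\Pi^\forall$ extends to a fully vertex-disjoint family $\Pi^\forall\cup\Pi^\exists$ in $\cN$. Because $\cN$ has the two-path property relative to $p$, each universal pair $(s_i,t_i)$ is served by exactly two paths that meet only at $s_i$, the (necessarily reticulate) parent of $t_i$, and $t_i$ itself; switching this reticulation thus encodes the choice between $\pi_i$ and $\pi_i'$. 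The strategy is to build $\cN_1$ as a small network whose $2^p$ displayed trees are in explicit bijection with the possible universal choices, and to build $\cN_2$ as an augmentation of $\cN$ that shares the leaves introduced in $\cN_1$, arranged so that a tree encoding a particular $\Pi^\forall$ lies in $T(\cN_2)$ precisely when that $\Pi^\forall$ can be completed by an existential family of mutually vertex-disjoint paths.

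The main obstacle I expect is realising this correspondence rigidly. On the $\cN_1$ side, the encoding must be injective, so distinct universal choices yield distinct displayed trees. On the $\cN_2$ side, the existence of a valid extension must correspond to a switching of $\cN_2$ that is consistent with the reticulation choices committed to by $\Pi^\forall$, and no ``spurious'' switching of $\cN_2$ (unrelated to a vertex-disjoint solution) may produce a tree from $T(\cN_1)$. I plan to exploit the caterpillar-inducing structure of $\cN$ with respect to $S$: it gives a canonical linear order on the leaves in $T$, which should translate vertex-disjointness of paths in $\cN$ into caterpillar-style tree containment in displayed trees. Concretely, I would attach small leaf-gadgets adjacent to each universal diamond and along the clause-side structure so that the displayed tree records exactly which path was used at each $(s_i,t_i)$; the equivalence then follows by checking both directions, that an extension of $\Pi^\forall$ in $\cN$ induces a switching of $\cN_2$ producing the matching tree, and that any displaying switching of $\cN_2$ can be read off to reconstruct a full set of mutually vertex-disjoint paths in $\cN$.
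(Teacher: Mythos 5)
Your proposal takes essentially the same route as the paper: the membership argument (guess a switching of $\cN$, compute the yielded tree, then make one NP-oracle call to {\sc Tree-Containment} against $\cN'$) is identical, and your hardness plan—reducing from {\sc $\forall\exists$ Phylo-Directed-Disjoint-Connecting-Paths} by building a small network $\cN_1$ whose $2^p$ displayed trees record the universal path choices via the order of two extra leaves $t_i',t_i''$ at each diamond, and building $\cN_2$ by augmenting $\cN$ with those same leaves on the two branches below each $s_i$, with the two-path and caterpillar-inducing properties supplying exactly the rigidity you anticipate needing—is the paper's construction. The only cosmetic deviation is your suggestion of additional gadgets along the existential (clause-side) pairs, which the paper shows to be unnecessary (it only adds one anchor leaf $t_0$ near the root to pin down the embedding).
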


\begin{proof}
We first show that {\sc Display-Set-Containment} is in $\Pi_2^P$. Let $\cN$ and $\cN'$ be two phylogenetic networks on $X$. To decide if $T(\cN)\subseteq T(\cN')$, guess a switching of $\cN$. Let $\cT$ be the phylogenetic $X$-tree that is yielded by $S$. Then use an NP-oracle for {\sc Tree-Containment} to decide if $\cT$ is displayed by $\cN'$. Since $\cN$ and $\cN'$ form a no-instance precisely if there exists some switching for $\cN$ that yields a phylogenetic tree that is not displayed by $\cN'$, it follows that {\sc Display-Set-Containment} is in co-NP$^{\text{NP}}=\Pi_2^P$. 

To complete the proof, we establish a reduction from {\sc $\forall \exists$ Phylo-Di\-rect\-ed-Disjoint-Connecting-Paths}. Using the same notation as in the formal statement of {\sc $\forall \exists$ Phylo-Di\-rect\-ed-Disjoint-Connecting-Paths}, let $I$ be the following instance of this problem. Let $\cN$ be a phylogenetic network on  $X$, let $S=\{s_1,s_2,\ldots,s_k\}$ and $T=X=\{t_1,t_2,\ldots,t_k\}$ be two  disjoint sets  of vertices of $\cN$, and let $p$ be an integer with $1\leq p < k$ such that $\cN$ is caterpillar-inducing with respect to $S$ and has the  two-path property relative to $p$. Furthermore, let
\begin{align*}
P^\forall &= \{(s_1,t_1),(s_2,t_2),\ldots,(s_p,t_p)\}, \\
P^\exists &= \{(s_{p+1},t_{p+1}),(s_{p+2},t_{p+2}),\ldots,(s_k,t_k)\}
\end{align*}
be two collections of pairs of elements in $S$ and $T$. This completes the description of $I$.  

\begin{figure}
\centering
\includegraphics[width=\textwidth]{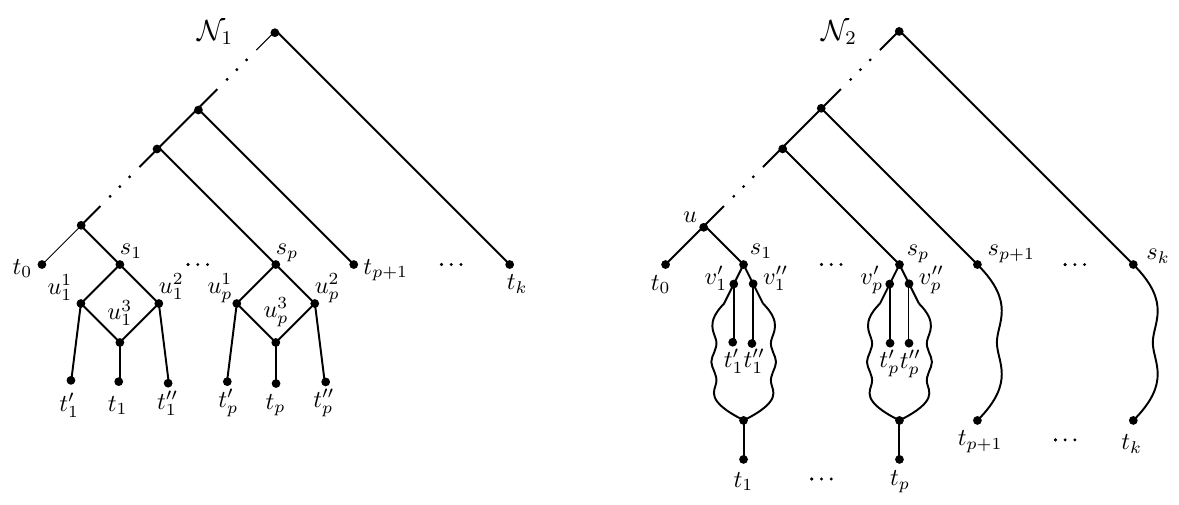}
\caption{The two phylogenetic networks $\cN_1$ and $\cN_2$ that are constructed in the proof of Theorem~\ref{t:display-containment}. For reasons of simplicity, not all edges of $\cN_2$ are shown. In particular, each squiggly line is a directed path and, depending on the given instance of {\sc $\forall \exists$ Phylo-Di\-rect\-ed-Disjoint-Connecting-Paths}, squiggly paths may intersect with each other and may be  further interconnected by paths that are not shown.}
\label{fig:gadgetTreesDisplayedSubset}
\end{figure}
Now, let $\cN _1$ be the phylogenetic network obtained from the caterpillar $(t_0,s_1,s_2,\ldots,s_p,t_{p+1},t_{p+2},\ldots,t_k)$ by adding the following edges and vertices for each $i\in\{1,2,\ldots,p\}$. Create three vertices $u_i^1$, $u_i^2$, and $u_i^3$ and  add the set \[\{(s_i,u_i^1),(s_i,u_i^2),(u_i^1,u_i^3),(u_i^2,u_i^3),(u_i^3,t_i),(u_i^1,t_i'),(u_i^2,t_i'')\}\] of edges. Observe that the leaf set of $\cN_1$ is \[X'=\{t_0,t_1,t_2,\ldots,t_k\}\cup\{t_i',t_i'': i\in\{1,2,\ldots,p\}\}.\]
The construction of $\cN_1$ is shown on the left-hand side of Figure~\ref{fig:gadgetTreesDisplayedSubset}.
We complete the reduction to an instance of {\sc Display-Set-Containment} by describing a second phylogenetic network $\cN_2$. For each $i\in\{1,2,\ldots,p\}$, let $w_i'$ and $w_i''$ be the two children of $s_i$ in $\cN$. As $\cN$ has the two-path property relative to $p$, recall that there are exactly two directed paths from $s_i$ to $t_i$ in $\cN$, and these two paths only have $s_i$, $t_i$, and the parent of $t_i$ in common. In the remainder of the proof, we denote the directed path from $s_i$ to $t_i$ that contains $w_i'$ with $\pi_i'$ and, similarly, we denote the directed path from $s_i$ to $t_i$ that contains $w_i''$ with $\pi_i''$. Lastly, we denote the parent of $s_1$ with $p_1$. Now, obtain $\cN_2$ from $\cN$ in the following way.
\begin{enumerate}[(i)]
\item Subdivide the edge $(p_1,s_1)$ with a new vertex $u$ and add the edge $(u,t_0)$.
\item For each $i\in\{1,2,\ldots,p\}$, subdivide $(s_i,w_i')$ with a new vertex $v_i'$, subdivide $(s_i,w_i'')$ with a new vertex $v_i''$, and add the two edges $(v_i',t_i')$ and $(v_i'',t_i'')$. 
\end{enumerate}
Clearly, the leaf set of $\cN_2$ is $X'$. To illustrate, $\cN_2$ is shown on the right-hand side in Figure~\ref{fig:gadgetTreesDisplayedSubset}.

As the size of $X'$ is polynomial in the size of $X$, it follows that the size of $\cN_1$ and $\cN_2$ is polynomial in the size of $\cN$. Furthermore, the construction of $\cN_1$ and $\cN_2$ takes polynomial time. 

\begin{sublemma}
The instance $I$ is a yes-instance if and only if $T(\cN_1)\subseteq T(\cN_2)$.
\end{sublemma}

\begin{proof}
First, suppose that $I$ is a yes-instance. Let $\cT'$ be a phylogenetic $X'$-tree that is displayed by $\cN_1$. For each $i\in\{1,2,\ldots,p\}$, note that $\cT'$ contains one of the two caterpillars $(t_i,t_i',t_i'')$ or $(t_i,t_i'',t_i')$. 
Let $\cJ'$ be the set that consists of each element $i\in\{1,2,\ldots,p\}$ for which $\cT'$ contains $(t_i,t_i',t_i'')$ and, similarly, let $\cJ''$ be the set that consists of each element $i\in\{1,2,\ldots,p\}$ for which $\cT'$ contains $(t_i,t_i'',t_i')$.
Furthermore, let $\Pi^\forall=\{\pi_1,\pi_2,\ldots,\pi_p\}$ be the set of directed paths in $\cN$ such that $\pi_i=\pi_i'$ if $i\in\cJ'$ and $\pi_i=\pi_i''$ if $i\in\cJ''$. 
Since $I$ is a yes-instance, there exists a set $\Pi=\Pi^\forall\cup\{\pi_{p+1},\pi_{p+2},\ldots,\pi_k\}$ of mutually vertex-disjoint directed paths in $\cN$, where $\pi_j$ is a directed path from $s_j$ to $t_j$ for each $j\in\{p+1,p+2,\ldots,k\}$. Moreover, as $\cN$ is caterpillar-inducing with respect to $S$, it is straightforward to check that there exists a  phylogenetic $X$-tree $\cT$ such that the following three properties are satisfied:
\begin{enumerate}[(i)]
\item $\cT$ is displayed by $\cN$,
\item $\cT=\cT'|X$, and
\item there exists an embedding of $\cT$ in $\cN$ that contains all edges of paths in $\Pi$. 
\end{enumerate}
Let $E_\cT$ be an embedding of $\cT$ in $\cN$ that satisfies (iii). By construction of $\cN_2$ from $\cN$, there exists an embedding of $\cT$ in $\cN_2$ whose set of edges is 
\begin{eqnarray*}
E_\cT'&=&(E_\cT-(\{(p_1,s_1)\}\cup\{(s_i,w_i'):i\in\cJ'\}\cup\{(s_i,w_i''):i\in\cJ''\}))\cup\\
&&\{(p_1,u),(u,s_1)\}\cup\{(s_i,v_i'),(v_i',w_i'):i\in\cJ'\}\cup\\
&&\{(s_i,v_i''),(v_i'',w_i''):i\in\cJ''\}.
\end{eqnarray*}
For each $i\in\{1,2,\ldots,p\}$, let $E_i'$ be the subset $\{(v_i',t_i'),(v_i'',t_i''),(s_i,v_i'')\}$ of edges in $\cN_2$ if $i\in\cJ'$, and  the subset $\{(v_i'',t_i''),(v_i',t_i'),(s_i,v_i')\}$ of edges in $\cN_2$ if $i\in\cJ''$. Since $E_\cT'$ is an embedding of $\cT$ in $\cN_2$, it now follows that \[E_\cT'\cup E_1'\cup E_2' \cup\cdots\cup E_p'\cup \{(u,t_0)\}\] is an embedding of $\cT'$ in $\cN_2$. Hence, $T(\cN_1)\subseteq T(\cN_2)$.

Second, suppose that $I$ is a no-instance. Throughout this part of the proof, we use $\pi_i$ to denote a directed path from $s_i$ to $t_i$ in $\cN$ for each $i\in\{1,2,\ldots,k\}$. Then, as $\cN$ has the two-path property relative to $p$, there is a set  $\Pi^\forall=\{\pi_1,\pi_2,\ldots,\pi_p\}$ of mutually vertex-disjoint directed paths in $\cN$ for which every set $\Pi=\Pi^\forall\cup\{\pi_{p+1},\pi_{p+2},\ldots,\pi_k\}$ of directed paths in $\cN$ contains two elements that are not vertex disjoint. For each $i\in\{1,2,\ldots,k\}$, let $E_i$ be the set of edges of $\pi_i$ in $\cN$. Furthermore, for each $i\in\{1,2,\ldots,p\}$, let $E_i'$ be the subset \[(E_i-\{(s_i,w_i')\})\cup\{(s_i,v_i'),(v_i',w_i'),(v_i',t_i'),(s_i,v_i''),(v_i'',t_i'')\}\] of edges in $\cN_2$ if $\pi_i=\pi_i'$, and  the subset \[(E_i-\{(s_i,w_i'')\})\cup\{(s_i,v_i''),(v_i'',w_i''),(v_i'',t_i''),(s_i,v_i'),(v_i',t_i')\}\] of edges in $\cN_2$ if $\pi_i=\pi_i''$, where $\pi_i'$ or $\pi_i''$ are as described in the construction of $\cN_2$ from $\cN$. Clearly, there is a phylogenetic tree $\cT_p$ with leaf set $\{t_i,t_i',t_i'':i\in\{1,2,\ldots,p\}\}$  for which there exists an embedding in $\cN_2$ that contains all edges in $E_1'\cup E_2'\cup\cdots\cup E_p'$. Observe that $\cT_p$ can be obtained from the caterpillar $(\ell_1,\ell_2,\ldots,\ell_p)$ by replacing each $\ell_i\in\{\ell_1,\ell_2,\ldots,\ell_p\}$ with the caterpillar $(t_i,t_i',t_i'')$ if $\pi_i=\pi_i'$ and with the caterpillar $(t_i,t_i'',t_i')$ if $\pi_i=\pi_i''$.
By construction, it now follows that $\cN_1$ displays $\cT_p$. Let $\cT$ be the unique phylogenetic $X'$-tree that is displayed by $\cN_1$ such that $\cT|\{t_i,t_i',t_i'':i\in \{1,2,\ldots,p\}\}=\cT_p$. We complete the argument by showing that $\cT$ is not displayed by $\cN_2$. Towards a contradiction, assume that $\cT$ is displayed by $\cN_2$.  Let $E_\cT'$ be an embedding of $\cT$ in $\cN_2$.
Then, since $\cT$ contains $(t_i,t_i',t_i'')$ or $(t_i,t_i'',t_i')$ for each $i\in\{1,2,\ldots,p\}$ and $\cN$ satisfies the two-path property relative to $p$, it follows from the construction of $\cN_2$ that $E_\cT'$ contains all edges in $E_1'\cup E_2'\cup\cdots\cup E_p'$. 
Furthermore, observe that there is a unique directed path from the root, say $\rho$, of $\cN_2$ to $t_0$, and so the edges on this path are elements of $E_\cT'$. For each pair $i$ and $i'$ of distinct elements in $\{1,2,\ldots,k\}$, it therefore follows that the directed path from $\rho$ to $t_i$ in $E_\cT'$ and the directed path  from $\rho$ to $t_{i'}$ in $E_\cT'$ only intersect in vertices that are ancestors of $t_0$ in $\cN_2$. Hence, as $\cN_2$ is caterpillar-inducing with respect to $S$, there exist directed paths $\pi_1^*,\pi_2^*,\ldots,\pi_p^*,\pi_{p+1}^*,\ldots,\pi_{k}^*$ in $E_\cT'$  such that the following three properties are fulfilled.
\begin{enumerate}[(i)]
\item For each $i\in\{1,2,\ldots,p\}$, $\pi_i^*$ is the unique directed path from $s_i$ to $t_i$ in $\cN_2$ that contains $v_i'$ if $\pi_i=\pi_i'$ and that contains $v_i''$ if $\pi_i=\pi_i''$.
\item For each $i\in\{p+1,p+2,\ldots,k\}$, $\pi_i^*$ is a directed path from $s_i$ to $t_i$ in $\cN_2$.
\item The elements in $\Pi^*=\{\pi_1^*,\pi_2^*,\ldots,\pi_{k}^*,\}$ are mutually vertex disjoint.
\end{enumerate}
Now, by construction, observe that $\pi_i^*$ is also a directed path from $s_i$ to $t_i$ in $\cN$ for each $i\in\{p+1,p+2,\ldots,k\}$.
As $\Pi^*$ is a set of  mutually vertex-disjoint directed paths in $\cN_2$, it now follows that, $\Pi^\forall\cup\{\pi_{p+1}^*,\pi_{p+2}^*,\ldots, \pi_{k}^*\}$ is a set of mutually vertex-disjoint directed paths in $\cN$. In turn, this implies that $I$ is a yes-instance; a contradiction. Hence, $\cT\notin T(\cN_2)$, and so $T(\cN_1)\nsubseteq T(\cN_2)$. 
\end{proof}

This establishes Theorem~\ref{t:display-containment}.
\end{proof}

We end this section with a brief discussion of the structural properties of the phylogenetic network $\cN_1$ that is constructed in the proof of Theorem~\ref{t:display-containment}. These properties will play an important role in the next section when we establish $\Pi_2^P$-completeness of {\sc Display-Set-Equivalence}. Let $\cN$ be a phylogenetic network on $X$. We say that $\cN$ is a {\it caterpillar network} if it can be obtained from a caterpillar $(\ell_1,\ell_2,\ldots,\ell_k)$ with $2\leq k\leq |X|$ by replacing each $\ell_i$ with a  phylogenetic network $\cN_i$ on $X_i$ such that the elements in $\{\cN_1,\cN_2,\ldots,\cN_k\}$ are pairwise vertex disjoint and 
\[\bigcup_{i=1}^k X_i=X.\] 
By construction, $\cN_1$ is a caterpillar network. Moreover, it is easily seen that $\cN_1$ is temporal and tree-child.

The next corollary now immediately follows from Theorem~\ref{t:display-containment}. 

\begin{corollary}\label{cor:display-containment}
Let $\cN_1$ be a temporal tree-child caterpillar network on $X$, and let $\cN_2$ be a phylogenetic network on $X$. Then deciding whether $T(\cN_1)\subseteq T(\cN_2)$ is $\Pi_2^P$-complete.
\end{corollary}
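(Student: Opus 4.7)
The plan is to observe that Corollary~\ref{cor:display-containment} follows almost immediately from Theorem~\ref{t:display-containment} once we verify the structural properties of $\cN_1$ that are already asserted in the paragraph preceding the corollary.

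First, for membership in $\Pi_2^P$, I would simply note that the restricted problem is a special case of the general {\sc Display-Set-Containment} problem, which is in $\Pi_2^P$ by the first part of the proof of Theorem~\ref{t:display-containment}. No new argument is needed here.

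For $\Pi_2^P$-hardness, I would reuse verbatim the polynomial-time reduction from {\sc $\forall \exists$ Phylo-Directed-Disjoint-Connecting-Paths} given in the proof of Theorem~\ref{t:display-containment}. That reduction takes an instance $I$ and produces two phylogenetic networks $\cN_1$ and $\cN_2$ on $X'$ with $I$ a yes-instance iff $T(\cN_1)\subseteq T(\cN_2)$. The only thing I need to check is that the $\cN_1$ produced by the reduction falls inside the restricted class named in the corollary, i.e. that it is a temporal tree-child caterpillar network. Inspecting the construction: $\cN_1$ is built from the caterpillar $(t_0,s_1,s_2,\ldots,s_p,t_{p+1},\ldots,t_k)$ by replacing each $s_i$ (for $i\in\{1,\ldots,p\}$) with the small gadget on leaves $\{t_i,t_i',t_i''\}$ formed by the vertices $u_i^1,u_i^2,u_i^3$ and the seven listed edges, and leaving every other $t_j$ as a trivial one-leaf subtree. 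Thus $\cN_1$ fits the definition of a caterpillar network, as already noted in the paragraph preceding the corollary.

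It then remains to verify tree-childness and temporality of $\cN_1$. For tree-childness, the only reticulation is $u_i^3$, whose unique child is the leaf $t_i$; the vertices $u_i^1,u_i^2$ each have the leaf child $t_i'$ or $t_i''$; $s_i$ has $u_i^1$ (a tree vertex) as a child; and all remaining non-leaf vertices lie on the spine of the caterpillar and have a leaf or a caterpillar-spine tree vertex as a child. Hence every non-leaf vertex has a child that is a leaf or a tree vertex. For temporality, I would exhibit a labeling $t$ that assigns strictly increasing values along the caterpillar spine, equal values to $s_i,u_i^1,u_i^2,u_i^3$ (the reticulation edges $(u_i^1,u_i^3)$ and $(u_i^2,u_i^3)$ require equal time stamps on their endpoints, and $(s_i,u_i^1)$, $(s_i,u_i^2)$ are tree edges — so instead we assign $t(s_i)$ strictly less than $t(u_i^1)=t(u_i^2)=t(u_i^3)$, which is strictly less than $t(t_i)$, $t(t_i')$, $t(t_i'')$), and then propagate along the spine. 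Since the gadgets are disjoint and attached only at the spine vertices $s_i$, such a consistent labeling plainly exists. Thus $\cN_1$ is a temporal tree-child caterpillar network, and the reduction of Theorem~\ref{t:display-containment} establishes $\Pi_2^P$-hardness of the restricted problem, completing the proof.

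The only mildly delicate point is checking temporality for the gadget; but since each gadget contains only one reticulation and two equal-length parallel paths of length two from $s_i$ to $u_i^3$, it is trivial. There is no genuine obstacle here — the corollary is essentially a bookkeeping remark about the reduction already constructed.
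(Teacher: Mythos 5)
Your proposal is correct and follows essentially the same route as the paper, which likewise obtains the corollary by observing that the network $\cN_1$ constructed in the reduction of Theorem~\ref{t:display-containment} is a temporal tree-child caterpillar network (membership in $\Pi_2^P$ being inherited from the general problem). Your explicit verification of tree-childness and of a temporal labeling with $t(s_i)<t(u_i^1)=t(u_i^2)=t(u_i^3)$ merely spells out what the paper dismisses as ``easily seen,'' and it is accurate.
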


\subsection{{\sc Display-Set-Equivalence} is $\Pi_2^P$-complete}\label{sec:display-set-equivalence}

With the result of Corollary~\ref{cor:display-containment} in hand, we are now in a position to establish the main result of Section~\ref{sec:DSE} which is the following theorem.

\begin{theorem}\label{t:display-equivalence}
{\sc Display-Set-Equivalence} is $\Pi_2^P$-complete. 
\end{theorem}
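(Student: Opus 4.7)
The plan is in two parts: membership in $\Pi_2^P$ followed by $\Pi_2^P$-hardness via a polynomial-time reduction from the restricted {\sc Display-Set-Containment} problem of Corollary~\ref{cor:display-containment}.

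For membership, observe that two networks $\cN$ and $\cN'$ on $X$ form a no-instance of {\sc Display-Set-Equivalence} precisely when there is a switching of $\cN$ or $\cN'$ that yields a phylogenetic $X$-tree not displayed by the other. A no-instance therefore admits a polynomial-size certificate (a switching together with a bit selecting the direction of failure) that is verifiable with a single call to an NP-oracle for {\sc Tree-Containment}. Hence the complement lies in $\Sigma_2^P$, putting {\sc Display-Set-Equivalence} in $\Pi_2^P$.

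For hardness, given an instance $(\cN_1,\cN_2)$ of the restricted {\sc Display-Set-Containment} problem (with $\cN_1$ a temporal tree-child caterpillar network on $X$), the plan is to construct, in polynomial time, two phylogenetic networks $\cN$ and $\cN'$ on a common leaf set so that $T(\cN)=T(\cN')$ if and only if $T(\cN_1)\subseteq T(\cN_2)$. I would take $\cN'$ to be $\cN_2$ (augmented, if necessary, by a cherry attached at the root containing a fresh leaf so that the leaf sets match) and build $\cN$ as a ``union network'' satisfying $T(\cN)=T(\cN_1)\cup T(\cN_2)$. Given such an $\cN$, the claimed equivalence is immediate: if $T(\cN_1)\subseteq T(\cN_2)$ then $T(\cN)=T(\cN_2)=T(\cN')$, whereas if $T(\cN_1)\not\subseteq T(\cN_2)$ then any $\cT\in T(\cN_1)\setminus T(\cN_2)$ witnesses $T(\cN)\neq T(\cN')$.

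The technical core, and the main obstacle, is realizing $T(\cN)=T(\cN_1)\cup T(\cN_2)$ by a polynomial-time construction. The naive attempt---placing parallel copies of $\cN_1$ and $\cN_2$ under a common new root and fusing each pair of corresponding leaves via a per-leaf reticulation---is too permissive, because mixed switchings produce hybrid trees that lie in neither $T(\cN_1)$ nor $T(\cN_2)$. The plan is to force an all-or-nothing choice by introducing a top-level selector gadget that exploits the caterpillar structure of $\cN_1$ supplied by Corollary~\ref{cor:display-containment}: the selector's binary choice is threaded down the caterpillar spine and, via suitably placed reticulation edges from the $\cN_2$-copy, commits every per-leaf reticulation to a single side. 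Verifying that no mixed switching of $\cN$ yields a surviving phylogenetic $X$-tree (so that $T(\cN)\subseteq T(\cN_1)\cup T(\cN_2)$) while every tree of $T(\cN_1)\cup T(\cN_2)$ is realized by some switching (so that the reverse inclusion holds) is where the caterpillar hypothesis is essential and where the bulk of the case analysis will lie.
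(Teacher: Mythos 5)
Your membership argument is fine and matches the paper's (guess a switching plus a direction bit, verify the yielded tree with one NP-oracle call for {\sc Tree-Containment}), and you correctly identify Corollary~\ref{cor:display-containment} as the right hardness source. The gap is in the hardness construction, and it is fatal as stated: your plan requires a polynomial-size network $\cN$ with $T(\cN)=T(\cN_1)\cup T(\cN_2)$ \emph{exactly}, and the ``selector gadget'' you propose to achieve this cannot work in the form you describe. The reason is the Observation in Section~\ref{sec:prelim}: \emph{every} switching of a phylogenetic network yields a phylogenetic tree on the full leaf set (no leaf is ever deleted in the clean-up, since the edge into a leaf is never a reticulation edge, and every surviving vertex retains a path from the root). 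So your stated goal --- ``verifying that no mixed switching of $\cN$ yields a surviving phylogenetic $X$-tree'' --- is unattainable: switchings make independent per-reticulation choices, mixed switchings always yield trees, and those hybrid trees necessarily belong to $T(\cN)$. A selector cannot veto them; at best one could hope that every hybrid tree coincides with a tree already in $T(\cN_1)\cup T(\cN_2)$, but with $\cN_2$ arbitrary there is no way to engineer that, and no construction realizing an exact union of display sets is known (nor given by you).

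The paper avoids this obstacle entirely: it never builds a union network. Instead it exploits the caterpillar hypothesis to split $\cN_1$ into two vertex-disjoint pieces $\cM_1$ and $\cM_{1'}$, duplicates the leaf set to $X\cup X'$, and hangs blocks along a long backbone: $\cN_1^*$ carries $n$ copies of $\cN_2$, then $\cN_1,\cM_1',\cM_{1'}'$, then $n$ copies of $\cN_2'$, while $\cN_2^*$ is identical except the middle three blocks are $\cM_1,\cM_{1'},\cN_1'$; all leaves with the same label are then merged into one reticulation. Both networks display many ``hybrid'' trees, but that is harmless because the claim proved is only that $T(\cN_1^*)=T(\cN_2^*)$ iff $T(\cN_1)\subseteq T(\cN_2)$: a pigeonhole argument over the backbone blocks (property (P) in the proof) shows any embedding in one network can be re-routed in the other by shifting its middle blocks one position, precisely when every tree of $T(\cN_1)$ is displayed by $\cN_2$, while a witness $\cT_1\in T(\cN_1)\setminus T(\cN_2)$ pins an embedding of the doubled tree at position $n+1$ in $\cN_1^*$ in a way $\cN_2^*$ can match but $\cN_1^*$ cannot for the converse direction. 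To repair your proof you would need to abandon the exact-union target and adopt some such padding-and-symmetry scheme in which equality, not union, is the engineered property.
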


\begin{proof}
Let $\cN$ and $\cN'$ be two phylogenetic networks on $X$. By Theorem~\ref{t:display-containment}, the problem of deciding whether or not $T(\cN)\subseteq T(\cN')$ is in $\Pi_2^P$. Similarly, the problem of deciding whether or not $T(\cN')\subseteq T(\cN)$ is in $\Pi_2^P$. Hence, {\sc Display-Set-Equivalence} is in $\Pi_2^P$.

We next establish a polynomial-time reduction from {\sc Display-Set-Con\-tainment} to {\sc Display-Set-Equivalence}. Let $\cN_1$ and $\cN_2$ be two phylogenetic networks on $X=\{\ell_1,\ell_2,\ldots,\ell_n\}$ that form the input to an instance of {\sc Display-Set-Containment} that asks if $T(\cN_1)\subseteq T(\cN_2)$. By Corollary~\ref{cor:display-containment}, we may assume that $\cN_1$ is a caterpillar network. Then there exist two vertex-disjoint phylogenetic networks $\cM_1$ and $\cM_{1'}$ with  leaf sets $W_1$ and $W_{1'}$, respectively, such that $W_1\cup W_{1'}=X$, and $\cN_1$ can be obtained from the caterpillar $\{x_1,x_2\}$ by replacing $x_1$ with $\cM_1$ and $x_2$ with $\cM_{1'}$. To ease reading, let $\cN_1'$ and $\cN_2'$ be the two phylogenetic networks on $X'=\{\ell_1',\ell_2',\ldots,\ell_n'\}$ that are obtained from $\cN_1$ and $\cN_2$, respectively, by replacing $\ell_i$ with $\ell_i'$ in both  networks for each $i\in\{1,2,\ldots,n\}$. Similarly, let $\cM_1'$ and $\cM_{1'}'$ be the two phylogenetic networks obtained from $\cM_1$ and $\cM_{1'}$, respectively, by replacing $\ell_i$ with $\ell_i'$ in exactly one of $\cM_1$ and $\cM_{1'}$ for each $i\in\{1,2,\ldots,n\}$. If $W_1'$ (resp. $W_{1'}'$) denotes the leaf set of $\cM_1'$ (resp. $\cM_{1'}'$), then $W_1'\cup W_{1'}'=X'$.

\begin{figure}[t]
\center
\includegraphics[width=\textwidth]{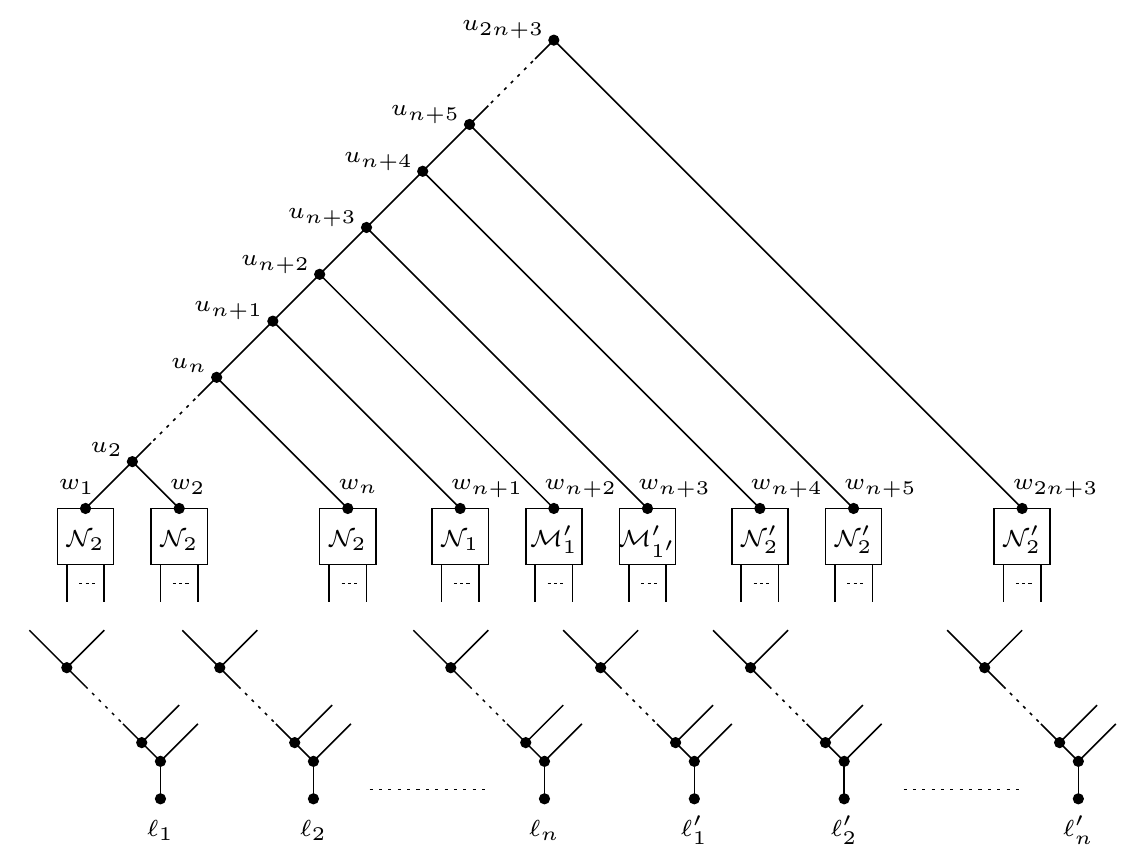}
\caption{The phylogenetic network $\cN_1^*$ on $X\cup X'$ as constructed in the proof of Theorem~\ref{t:display-equivalence}. Each of the squares labeled $\cN_2$, $\cN_1$, $\cM_1'$, $\cM_{1'}'$, and $\cN_2'$ refers to the network obtained from its namesake phylogenetic network by deleting all leaf labels. The dangling edges of each square are paired up with the edges shown in the bottom part of the figure as described in Step~(\ref{step-x}) of the construction in the proof of Theorem~\ref{t:display-equivalence}.}
\label{fig:DSE-network1}
\end{figure}

\begin{figure}[t]
\center
\includegraphics[width=\textwidth]{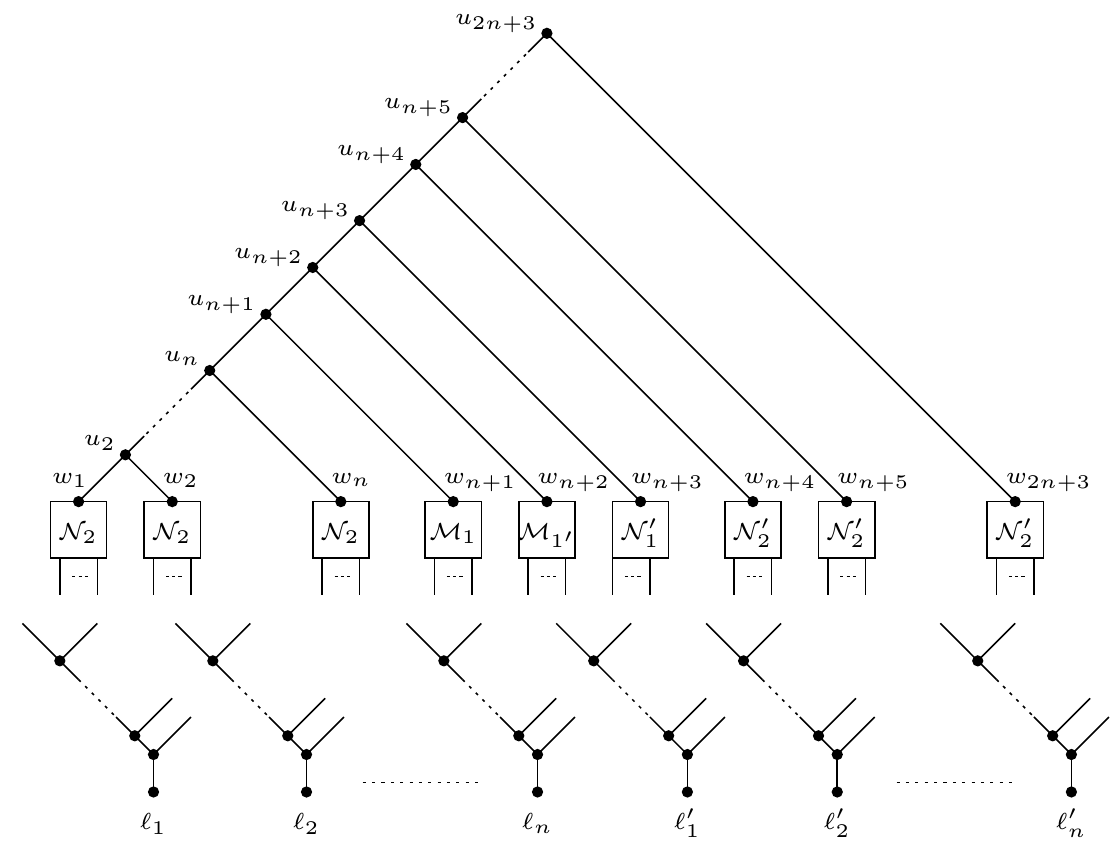}
\caption{The phylogenetic network $\cN_2^*$ on $X\cup X'$ as constructed in the proof of Theorem~\ref{t:display-equivalence}. Each of the squares labeled $\cN_2$, $\cM_1$, $\cM_{1'}$, $\cN_1'$ and $\cN_2'$ refers to the network obtained from its namesake phylogenetic network by deleting all leaf labels. The dangling edges of each square are paired up with the edges shown in the bottom part of the figure as described in Step~(\ref{step-x}) of the construction in the proof of Theorem~\ref{t:display-equivalence}.}
\label{fig:DSE-network2}
\end{figure}

Set $\cT$ as well as $\cT'$ to be the caterpillar $(w_1,w_2,\ldots,w_{2n+3})$. Furthermore, let $u_{2n+3},u_{2n+2},\ldots,u_2$ be the directed path in $\cT$ (and $\cT'$) such that, for all $j\in\{2,3,\ldots,2n+3\}$, $u_j$ is the parent of $w_j$. Now, let  $G_1^*$ and $G_2^*$ be the two directed acyclic graphs that are obtained from $\cT$ and $\cT'$, respectively, by applying the following six-step process.
\begin{enumerate}
\item For all $j\in\{1,2,\ldots,n\}$, replace $w_j$ with $\cN_2$ in $\cT$ and $\cT'$ by identifying $w_j$ with the root of $\cN_2$.
\item Replace $w_{n+1}$ with the root of $\cN_1$ in $\cT$ by identifying $w_{n+1}$ with the root of $\cN_1$, and replace $w_{n+1}$ with the root of $\cM_1$ in $\cT'$ by identifying $w_{n+1}$ with the root of $\cM_1$
\item Replace $w_{n+2}$ with $\cM_1'$ in $\cT$ by identifying $w_{n+2}$ with the root of $\cM_1'$, and replace $w_{n+2}$ with $\cM_{1'}$ in $\cT'$ by identifying $w_{n+2}$ with the root of $\cM_{1'}$
\item Replace $w_{n+3}$ with $\cM_{1'}'$ in $\cT$ by identifying $w_{n+3}$ with the root of $\cM_{1'}'$, and replace $w_{n+3}$ with $\cN_1'$ in $\cT'$ by identifying $w_{n+3}$  with the root of $\cN_1'$.
\item For all $j\in\{n+4,n+5,\ldots,2n+3\}$, replace $w_j$ with $\cN_2'$ in $\cT$ and $\cT'$ by identifying $w_j$ with the root of $\cN_2'$.
\item \label{step-x}For each $i\in\{1,2,\ldots,n\}$, identify all leaves labeled $\ell_i$ (resp. $\ell_i'$) in $\cT$ with a new vertex $v_i$ (resp. $v_i'$), add a new edge $(v_i,\ell_i)$ (resp. $(v_i',\ell_i')$). Do the same for all leaves labeled $\ell_i$ (resp. $\ell_i'$) in $\cT'$.
\end{enumerate}
To complete the construction, let $\cN_1^*$ and $\cN_2^*$ be two phylogenetic networks such that $G_1^*$ and $G_2^*$ can be obtained from $\cN_1^*$ and $\cN_2^*$, respectively, by contracting edges. Clearly, the leaf set of $\cN_1^*$ and $\cN_2^*$ is $X\cup X'$. Moreover, the directed path $u_{2n+3},u_{2n+2},\ldots,u_2$ of $\cT$ and $\cT'$ is also a directed path of $\cN_1^*$ and $\cN_2^*$. We refer to this path as the {\it backbone} of $\cN_1^*$ and $\cN_2^*$. The phylogenetic networks $\cN_1^*$ and $\cN_2^*$ are shown in Figures~\ref{fig:DSE-network1} and~\ref{fig:DSE-network2}, respectively. Lastly, observe that the size of both $\cN_1^*$ and $\cN_2^*$ is $O(n(|E_1|+|E_2|))$, where $E_1$ and $E_2$ is the edge set of $\cN_1$ and $\cN_2$, respectively. Hence, the construction of $\cN_1^*$ and $\cN_2^*$ takes polynomial time. 

\begin{sublemma}
$T(\cN_1)\subseteq T(\cN_2)$ if and only if $T(\cN_1^*)=T(\cN_2^*)$.
\end{sublemma}

\begin{proof}
Throughout this proof, let $U=\{u_2,u_3,\ldots,u_{2n+3}\}$ be the vertex set of the backbone of $\cN_1^*$ and $\cN_2^*$, and let \[E_U=\{(u_2,w_1),(u_2,w_2),(u_3,w_3),\ldots,(u_{2n+3},w_{2n+3})\}\] be the set of edges in $\cN_1^*$ and $\cN_2^*$ that are directed from a vertex in $U$ to a vertex not in $U$. Furthermore, for a vertex $v$ and an embedding $E$, we say that $v$ is {\it in} $E$ if there exists an edge in $E$ that is incident with $v$. If $v$ is in $E$, then we denote this by $v\in E$.

First, suppose that $T(\cN_1)\nsubseteq T(\cN_2)$. Let $\cT_1$ be a phylogenetic $X$-tree such that $\cT_1\in T(\cN_1)$ and $\cT_1\notin T(\cN_2)$. Let $\cT_1'$ be the phylogenetic $X'$-tree obtained from $\cT_1$ by replacing $\ell_i$ with $\ell_i'$ for each $i\in\{1,2,\ldots,n\}$. Furthermore, let $\cT$ be the phylogenetic $(X\cup X')$-tree obtained from $\cT_1$ and $\cT_1'$ by creating a new vertex $\rho$, adding an edge that joins $\rho$ with the root of $\cT_1$, and adding an edge that joins $\rho$ with the root of $\cT_1'$. As $\cN_1$ displays $\cT_1$ and $\cN_1'$ displays $\cT_1'$, it is easy to check that an embedding of $\cT$ in $\cN_2^*$ can be obtained from 
adding edges of
$\cN_2^*$ to \[\{(u_{n+3},u_{n+2}),(u_{n+2},u_{n+1}),(u_{n+1},w_{n+1}),(u_{n+2},w_{n+2}),(u_{n+3},w_{n+3})\}\] such that each element in $X$ is a descendant of $u_{n+2}$, each element in $X'$ is a descendant of $w_{n+3}$. Hence, $\cT$ is displayed by $\cN_2^*$.

We next show that $\cT$ is not displayed by $\cN_1^*$. Towards a contradiction, assume that $\cT$ is displayed by $\cN_1^*$. Let $E_1$ be an embedding of $\cT$ in $\cN_1^*$.
Furthermore, let $k$ be the maximum element in $\{1,2,\ldots,2n+3\}$ such that $w_k \in E_1$. By construction of $\cT$, either each element in $X$ is a descendant of $w_k$ in $E_1$ or each element in $X'$ is a descendant of $w_k$ in $E_1$. Thus, as $\cN_2$ does not display $\cT_1$ and $\cN_2'$ does not display $\cT_1'$, we have $k=n+1$. In particular, each element in $X$ is a descendant of $w_k$ in $E_1$. But no element in $X'$ is a descendant of $u_k$ in $E_1$; a contradiction. Hence, $\cT$ is not displayed by $\cN^*_1$, and so $T(\cN^*_1)\neq T(\cN^*_2)$.

Second, suppose that $T(\cN_1)\subseteq T(\cN_2)$. Let $\cT$ be a phylogenetic $(X\cup X')$-tree that is displayed by $\cN_1^*$, and let $E_1$ be  an embedding of $\cT$ in $\cN_1^*$. For each $j\in\{1,2,\ldots,2n+3\}$ with $w_j\in E_1$, let $Y_j$ be the set that consists of all leaves that are descendants of $w_j$ in $E_1$, and let $\cT_j$ be the phylogenetic tree obtained from the minimal rooted subtree of $E_1$ that connects all leaves in $Y_j$ by suppressing all vertices with in-degree one and out-degree one.  If $w_{n+1}\in E_1$, then, by the pigeonhole principle, there exists an element $j\in\{1,2,\ldots,n\}$ such that $w_j\notin E_1$. Similarly, if $w_{n+3}\in E_1$, then there exists an element $j'\in\{n+4,n+5\ldots,2n+3\}$ such that $w_{j'}\notin E_1$. Without loss of generality, we may therefore assume by the construction of $\cN_1^*$ that $E_1$ satisfies the following property.

\noindent (P) If $w_{n+1}\in E_1$, then $w_n\notin E_1$ and, if $w_{n+3}\in E_1$, then $w_{n+4}\notin E_1$.

Recall that each tree in $T(\cN_1)$ is displayed by $\cN_2$, each tree in $T(\cM_1')$ is displayed by $\cN_1'$, and each tree in $T(\cM_{1'}')$ is displayed by $\cN_2'$. Hence, there exists a set $E_2$ of edges of $\cN_2^*$ such that the following conditions are satisfied.
\begin{enumerate}[(i)]
\item For each $j\in\{1,2,\ldots,n, n+4,n+5,\ldots,2n+3\}$, if $w_j\in E_1$, then $w_j$ is the root of a subtree in $E_2$ that is a subdivision of $\cT_j$.
\item If $w_{n+1}\in E_1$, then $w_n$ is the root of a subtree in $E_2$ that is a subdivision of $\cT_{n+1}$.
\item If $w_{n+3}\in E_1$, then $w_{n+4}$ is the root of a subtree in $E_2$ that is a subdivision of $\cT_{n+3}$.
\item If $w_{n+2}\in E_1$, then $w_{n+3}$ is the root of a subtree in $E_2$ that is a subdivision of $\cT_{n+2}$.
\end{enumerate}
Since $E_1$ satisfies (P), $E_2$ is well defined. Moreover, as $\cT$ is displayed by $\cN_1^*$, it now follows that there exists an embedding of $\cT$ in $\cN_2^*$ that contains all edges in $E_2$. Thus $T(\cN_1^*)\subseteq T(\cN_2^*)$.

Now, let $\cT$ be a phylogenetic $(X\cup X')$-tree that is displayed by $\cN_2^*$. To see that $\cT$ is displayed by $\cN_1^*$, we can use the same argument as the one to show that $T(\cN_1^*)\subseteq T(\cN_2^*)$ even thought the assumption that $T(\cN_1)\subseteq T(\cN_2)$ is not symmetric. In particular, we interchange the roles of $\cN_1^*$ and $\cN_2^*$ (and, consequently, the roles of $E_1$ and $E_2$). Moreover, as each tree in $T(\cM_1)$ is displayed by $\cN_2$, each tree in $T(\cM_{1'})$ is displayed by $\cN_1$, and each tree in $T(\cN_1')$ is displayed by $\cN_2'$, only Condition (iv) above needs to be rewritten as follows.
\begin{enumerate}[(iv*)]
\item If $w_{n+2}\in E_2$, then $w_{n+1}$ is the root of a subtree in $E_1$ that is a subdivision of $\cT_{n+2}$.
\end{enumerate}
It is now straightforward to check that $\cT$ is displayed by $\cN_1^*$, and so $T(\cN_2^*)\subseteq T(\cN_1^*)$. Combining both cases establishes that $T(\cN_2^*)=T(\cN_1^*)$.
\end{proof}
This completes the proof of  Theorem~\ref{t:display-equivalence}.
\end{proof}

\section{Conclusion}\label{sec:conclu}
We end this paper, with three corollaries that are implied by the results presented in Section~\ref{sec:CTC} and an open problem.

For two temporal networks $\cN$ and $\cN'$ on $X$, the authors of~\cite{linz13} showed that counting the number of elements in $T(\cN)\cap T(\cN')$ is \#P-complete. Since {\sc Common-Tree-Containment} is the decision version of computing $|T(\cN)\cap T(\cN')|$ and computational hardness of a decision problem implies computational hardness of the associated counting problem, the next corollary follows from Theorem~\ref{t:CTC}.

\begin{corollary}
Let $\cN$ and $\cN'$ be two temporal normal networks on $X$. Then counting the number of elements in $\cT(\cN)\cap \cT(\cN')$ is \#{\rm P}-complete.
\end{corollary}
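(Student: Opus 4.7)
The plan is to verify the two conditions for \#P-completeness separately: membership in \#P and \#P-hardness.

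For membership, I would invoke the polynomial-time algorithm for {\sc Tree-Containment} on normal networks of van Iersel et al.~\cite{iersel10} (improved to linear time in~\cite{weller18}). Given two temporal normal networks $\cN$ and $\cN'$ on $X$, a nondeterministic polynomial-time Turing machine can guess a canonical encoding of a phylogenetic $X$-tree $\cT$ of polynomial size and, on each computation path, verify $\cT\in T(\cN)\cap T(\cN')$ by invoking this algorithm twice. The number of accepting computations then equals $|T(\cN)\cap T(\cN')|$, placing the counting problem in \#P.

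For \#P-hardness, I would use the standard reduction from decision to counting: any algorithm that outputs $|T(\cN)\cap T(\cN')|$ decides whether the intersection is non-empty, and so solves {\sc Common-Tree-Containment} on the same input. Since Theorem~\ref{t:CTC} establishes NP-hardness of that decision problem on pairs of temporal normal networks, the counting problem inherits hardness, and combined with \#P-membership we obtain \#P-completeness, exactly mirroring the argument in~\cite{linz13} for the broader class of temporal networks.

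The only delicate point is the customary leap from NP-hardness of decision to genuine \#P-hardness of counting: the Turing reduction above only yields NP-hardness of counting under Turing reductions, which suffices by the usual convention in the phylogenetics literature once \#P-membership is in hand. If a strictly parsimonious reduction is preferred, one can inspect the construction in the proof of Theorem~\ref{t:CTC}: a satisfying truth assignment $\beta$ uniquely determines the variable tree $\cT_v$, and for each clause $C_j$ it uniquely determines the clause-tree pattern $(z_1,z_2,z_3)$ and thus $\cT_j^{z_1z_2z_3}$, so a careful bookkeeping of this correspondence reduces $\#$3-SAT (nearly) parsimoniously to the counting problem.
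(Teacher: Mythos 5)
Your proposal is correct and takes essentially the same route as the paper: the paper likewise combines \#P-membership (resting on the result of~\cite{linz13} for temporal networks, i.e.\ on polynomial-time verification of tree containment) with the observation that NP-hardness of the decision problem {\sc Common-Tree-Containment} for temporal normal networks (Theorem~\ref{t:CTC}) transfers to the associated counting problem. Your explicit \#P-membership argument and your caveat about Turing versus parsimonious reductions are in fact slightly more careful than the paper's one-line justification, which simply asserts that computational hardness of a decision problem implies computational hardness of the associated counting problem.
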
 

In 2015, Francis and Steel~\cite{francis15} introduced tree-based networks. A phylogenetic network $\cN$ on $X$ is {\em tree-based} if, up to suppressing vertices of in-degree one and out-degree one, $\cN$ displays a phylogenetic $X$-tree $\cT$ that can be obtained by only deleting reticulation edges, in which case, $\cT$ is a {\em base tree} of $\cN$. If $\cN$ is tree-based, it is well known that not every phylogenetic $X$-tree displayed by $\cN$ is a base tree. However, noting that each tree-child network is also a tree-based network, it is shown in~\cite{semple16}  that a phylogenetic tree $\cT$ is displayed by a tree-child network $\cN$ if and only if $\cT$ is a base tree of $\cN$. Hence, for two tree-child networks $\cN$ and $\cN'$, the problem of deciding whether or not $T(\cN)\cap T(\cN')\ne\emptyset$ is equivalent to deciding whether or  not $\cN$ and $\cN'$ have a common base tree. 

\begin{corollary}\label{cor:last}
Let $\cN$ and $\cN'$ be two tree-based networks on $X$. Then deciding if $\cN$ and $\cN'$ have a common base tree is {\rm NP}-complete.
\end{corollary}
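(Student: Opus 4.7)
The plan is to derive Corollary~\ref{cor:last} directly from Theorem~\ref{t:CTC} together with the characterization of displayed trees in tree-child networks established in~\cite{semple16} and recalled above the statement.

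For membership in NP, the proposed certificate is the common base tree $\cT$ itself, together with, for each of $\cN$ and $\cN'$, a choice of one incoming reticulation edge to keep at each reticulation (equivalently, a switching). Verification amounts to performing the prescribed reticulation-edge deletions, suppressing all resulting in-degree one and out-degree one vertices, and checking that each of the two outputs equals $\cT$. This runs in time polynomial in $|\cN|+|\cN'|+|\cT|$, so the problem lies in NP.

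For NP-hardness, I will use the identity reduction from {\sc Common-Tree-Containment} restricted to pairs of temporal normal networks, which is NP-hard by Theorem~\ref{t:CTC}. Given such an instance $(\cN,\cN')$, both networks are tree-child and hence tree-based, so they form a valid instance of the common base tree problem. Invoking the result of~\cite{semple16} separately on each of $\cN$ and $\cN'$, a phylogenetic $X$-tree $\cT$ lies in $T(\cN)$ (respectively $T(\cN')$) if and only if $\cT$ is a base tree of $\cN$ (respectively $\cN'$). Consequently, $T(\cN)\cap T(\cN')\neq\emptyset$ holds exactly when $\cN$ and $\cN'$ admit a common base tree, so yes-instances map to yes-instances and no-instances to no-instances. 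No substantial obstacle is anticipated: the entire argument amounts to combining NP-hardness already established in Theorem~\ref{t:CTC} with the displayed-equals-base-tree equivalence for tree-child networks.
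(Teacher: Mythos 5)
Your proposal is correct and takes essentially the same route as the paper: the hardness direction is the identical identity reduction from {\sc Common-Tree-Containment} on temporal normal networks via the displayed-tree/base-tree equivalence of~\cite{semple16}, and your NP certificate is a minor variant of the paper's ``base-tree switching'' verification. Your equality check works because you suppress only in-degree-one out-degree-one vertices (performing no other deletions), so any switching that fails the paper's base-tree condition leaves an unlabeled out-degree-zero vertex and the output cannot equal $\cT$.
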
 

\begin{proof}
Let $S$ be a switching of $\cN$, and let $\cT$ be a phylogenetic $X$-tree. 
We say that $S$ is a {\em base-tree switching} if, for each non-leaf vertex $u$ in $\cN$ that is the parent of only reticulations, there exists an edge $(u, v)$ in $S$.
By the definition of a tree-based network it follows that $\cT$ is a base tree of $\cN$ if and only if there exists a base-tree switching $S$ of $\cN$ that yields $\cT$.  Now, let $S$ be a switching of $\cN$, and let $S'$ be a switching of $\cN'$.  If $S$ is a base-tree switching of $\cN$ and $S'$ is a base-tree switching of $\cN'$, and $S$ and $S'$ yield the same tree, then $\cN$ and $\cN'$ have a common base tree. Since it can be checked in polynomial time if $S$ (resp. $S'$) is a base-tree switching of $\cN$ (resp. $\cN'$), and if $S$ and $S'$ yield the same tree, it follows that deciding whether or not $\cN$ and $\cN'$ have a common base tree is in NP. The corollary now follows from Theorem~\ref{t:CTC}. 
\end{proof}

Lastly, using (ordinary) switchings instead of base-tree switching, ideas analogous to the ones described in the proof of Corollary~\ref{cor:last} can be used to show that {\sc Common-Tree-Containment} is in NP for two arbitrary phylogenetic networks. The next corollary is now an immediate consequence of Theorem~\ref{t:CTC}.

\begin{corollary}
{\sc Common-Tree-Containment} is {\rm NP}-complete for two arbitrary phylogenetic networks.
\end{corollary}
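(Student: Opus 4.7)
The plan is to reuse Theorem~\ref{t:CTC} for the hardness direction and to adapt the base-tree switching idea from Corollary~\ref{cor:last} to obtain membership in NP for the fully general setting. For hardness, note that every pair of temporal normal networks is in particular a pair of arbitrary phylogenetic networks, so the NP-hardness part of Theorem~\ref{t:CTC} carries over verbatim. Hence the only real work is to verify that {\sc Common-Tree-Containment} lies in NP when no restriction is placed on the input networks.

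For membership in NP, I would take as a certificate a pair $(S, S')$, where $S$ is a switching of $\cN$ and $S'$ is a switching of $\cN'$. Each switching has size at most the number of reticulations of the respective network and so is polynomial in the size of the input. From $S$ (respectively $S'$) one obtains in polynomial time the phylogenetic $X$-tree $\cT_S$ (respectively $\cT_{S'}$) that $S$ (respectively $S'$) yields, by deleting the non-selected reticulation edges and repeatedly suppressing degree-two vertices and pruning leftover vertices as described in the preliminaries. One then checks, again in polynomial time, whether $\cT_S = \cT_{S'}$ as phylogenetic $X$-trees, which can be done, for example, by comparing the cluster sets of the two trees. By the Observation preceding the problem statements, $\cN$ displays a tree $\cT$ if and only if there is a switching of $\cN$ that yields $\cT$; applying this to both $\cN$ and $\cN'$ shows that $T(\cN) \cap T(\cN') \neq \emptyset$ if and only if there exist switchings $S$ and $S'$ of $\cN$ and $\cN'$, respectively, with $\cT_S = \cT_{S'}$. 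Thus the certificate is valid and {\sc Common-Tree-Containment} is in NP.

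Combining these two observations, the corollary then follows immediately from Theorem~\ref{t:CTC}. I do not expect any real obstacles here: the only minor point of care is to be explicit that the certificate consists of a switching for \emph{each} of the two input networks (rather than a single displayed tree, which would also work but is slightly less natural given how Corollary~\ref{cor:last} was phrased), and to verify that yielding a tree from a switching and testing equality of two phylogenetic $X$-trees are both polynomial-time tasks.
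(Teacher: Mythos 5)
Your proof is correct and follows essentially the same route as the paper, which also certifies membership in NP via a pair of ordinary switchings (one per network) that yield the same tree, mirroring the base-tree switching argument of Corollary~\ref{cor:last}, and then invokes Theorem~\ref{t:CTC} for hardness. Your write-up merely makes explicit the polynomial-time details (computing the yielded trees and testing tree equality) that the paper leaves as analogous to Corollary~\ref{cor:last}.
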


Now, let $C$ be a class of phylogenetic networks for which {\sc Tree-Contain\-ment} is solvable in polynomial time such as tree-child or, more generally, reticulation-visible networks~\cite{bordewich16,gunawan17, iersel10}. Furthermore, let $\cN$ and $\cN'$ be two networks in $C$. Then deciding if $\cT(\cN)=T(\cN')$  is in co-NP because, given a tree $\cT$ that is displayed by $\cN$ or $\cN'$, it can be checked in polynomial time, if $\cT$ is also displayed by the other network. If this is not the case, then $\cN$ and $\cN'$ form a no-instance of  {\sc Display-Set-Equivalence}. Whether {\sc Display-Set-Equivalence} for $\cN$ and $\cN'$ is co-NP-complete remains an open problem. Nevertheless, it is unlikely that {\sc Display-Set-Equivalence} for $\cN$ and $\cN'$ is 
$\Pi_2^P$-complete since a problem that is $\Pi_2^P$-complete and in co-NP would imply that co-NP=$\Pi_2^P$ which, in turn, would result in a collapse of the polynomial hierarchy to the first level.

\end{document}